\documentclass[11pt]{amsart}

\usepackage{graphicx}
\usepackage{comment}
\usepackage{color}
\vfuzz2pt 
\hfuzz2pt 
\newtheorem{thm}{Theorem}[section]
\newtheorem{cor}[thm]{Corollary}
\newtheorem{lem}[thm]{Lemma}

\theoremstyle{definition}
\newtheorem{defn}[thm]{Definition}
\theoremstyle{remark}
\newtheorem{rem}[thm]{Remark}
\numberwithin{equation}{section}

\newcommand{\T}{T}

\newcommand{\Tr}{Tr}


\begin{document}

\title[]{The Non-Euclidean Euclidean Algorithm}

\author{Jane Gilman}
\address{Mathematics Department, Rutgers University, Newark, NJ 07102}
\email{gilman@rutgers.edu}%

\keywords{hyperbolic geometry, Kleinian groups, discreteness criteria, algorithms, Teichmuller theory}%

\dedicatory{Dedicated to the memory of F.W. Gehring}%


\begin{abstract}
In this paper we demonstrate how the geometrically motivated  algorithm to determine whether a two generator real M{\"o}bius group acting on the Poincar{\'e} plane is or is not discrete can be interpreted as a {\sl non-Euclidean Euclidean algorithm}. That is, the algorithm can be viewed as an application of the Euclidean division algorithm to real numbers that represent hyperbolic distances. In the case that the group is discrete and free, the algorithmic procedure also gives a non-Euclidean Euclidean algorithm to find the three shortest curves on the corresponding quotient surface.
\end{abstract}

\maketitle

\section{Introduction}

The problem of determining whether a two generator real M{\"o}bius group acting on the Poincar{\'e} plane (hyperbolic two-space)  is or is  not discrete  is an old one. If such a group is non-elementary and discrete, it is, of course, a Fuchsian group. There are many approaches to this problem, some of them incomplete. One of the most complete answers is given by the use of an algorithm. The algorithm can be given in a number of different forms \cite{G1}, including a geometric form and an algebraic form.  Revisiting the algorithm has been productive, as the algorithm has been shown to have  a number of  useful implications, including results about primitives and palindromes in rank two free groups,  discreteness criteria for complex M\"obius groups acting on hyperbolic three space, and the computational complexity of the discreteness problem \cite{G, G1, GKwords,  GKgeom,  GKEnum, Gx, YCJ}.
In this paper we revisit the  Gilman-Maskit  algorithm
     \cite{GM} in the case of a pair of hyperbolic generators $A$ and $B$ with disjoint axes  and illustrate that it is a type of {\sl non-Euclidean Euclidean algorithm}.
We refer
to the Gilman-Maskit geometric form of the algorithm as the GM algorithm or as the geometric algorithm or simply as the algorithm.

             An algorithm is termed a non-Euclidean Euclidean algorithm if it involves performing Euclidean algorithm type calculations to quantities that are non-Euclidean lengths, in particular to the
translation lengths,  $T_A$ and $T_B$,  of the isometries when they (or a conjugate pair)  act as hyperbolic isometries on the Poincar{\'e} plane.

The algorithms in \cite{GM,G2}  can also be viewed as algorithms to find the three shortest geodesic  when the group is discrete. The paper \cite{G2} addresses the case of intersecting hyperbolic axes.
We point out how the concept of the non-Euclidean Euclidean algorithm applies to all types of pairs of isometries acting on hyperbolic two-space as such a shortest length algorithm  even where no algorithm is needed to determine discreteness (e.g. as in the case of two hyperbolic generators with intersecting axes and a hyperbolic commutator). This is done in section \ref{section:all}.

A quick heuristic way to  understand the term {\sl Non-Euclidean Euclidean algorithm} (an NEE algorithm) is by referring to
the description\footnote{After this interpretation of the algorithm as a NEE algorithm was in place, but before the manuscript was written, the author saw this description in a lecture by Hoban.}
 given by Ryan Hoban \cite{Ryan} (see Figure~\ref{fig:ManShadow}).
\begin{figure}[t] \begin{center}
\includegraphics[height=5cm]{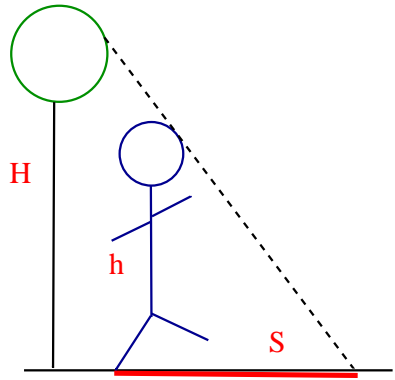}
\caption{In calculus one often asks students to analyze the picture of a person whose shadow is being cast  by
a lamppost. The analysis can be requested under varying rates (e.g. the  rate that the person walks away from the lamppost or the rate that shadow grows) with different quantities shown in the figure known and unknown. Now analyze this situation  when the distances are all hyperbolic distances and the rates are also rates of change of the hyperbolic quantities.}
\label{fig:ManShadow} \end{center}
 \end{figure}

\subsection{The organization of the paper} The main theorems, Theorem  \ref{theorem:new} and its companion Theorem \ref{theorem:pars},  are stated in section \ref{section:main} using a minimal amount of notation and background material. In fact although some notation and terminology is given in section \ref{section:NOTe},  most of section \ref{section:main} can be read without this. In section \ref{section:NOT} further notation is set and terminology reviewed. This section includes subsections on  factoring and the algorithmic paths. In section  \ref{section:barebones}
we give a barebones description of the geometric algorithm
together with figures illustrating the geometry of some of the cases we
need to consider.  The proofs of the main theorems along with the relevant lemmas and their proofs are in section \ref{section:mainlemma}. At the end of this section Theorem  \ref{theorem:new} and its companion Theorem \ref{theorem:pars} are combined into a single theorem,   Theorem \ref{theorem:HP}.
 In section \ref{section:analogy} the analogy is explained and in section \ref{section:all} elliptic elements are addressed and the result on shortest curves, Corollary \ref{cor:shortest}, is stated and proved.

\section{Notation and Terminology} \label{section:NOTe}

 We begin by summarizing standard results about M\"obius transformations acting on the Poincar{\'e} plane (see \cite{Beard}).

Let $X$ be a M\"obius transformation in $PSL(2,\mathbb{R})/\pm id$ and $\tilde{X}$ one of the two matrices in $SL(2,\mathbb{R})$ that projects to $X$. $\tilde{X}$ is termed a pull back of $X$ or equivalently a  lift of $X$.

Now $\tilde{X}$ acts as a fractional linear transformation on the extended complex plane $\hat{\mathbb{C}}$. Since $\tilde{X}$ and its negative induce the same action as fractional linear transformations, $X$ also acts as a fractional linear transformation.

Elements of $SL(2,\mathbb{R})$ fix the upper-half plane, ${{\mathcal{U}}}= \{ z = x+ iy \; | \; y > 0 \}$  and act as isometries when ${\mathcal{U}}$ is endowed with the with the Poincar{\'e} metric. Similarly complex fractional linear transformations that fix the unit disc, $\mathbb{D}$,   act as isometries on the unit disc model for hyperbolic space. Now   ${\mathcal{U}}$ is mapped onto $\mathbb{D}$ by the transformation
$z \mapsto {\frac{z-i}{z + i}}$ and this transformation induces a map from the Poincar{\'e} metric on the upper-half-plane to the Poincar{\'e} metric on the unit disc. Conjugation by the matrix
${\frac{1}{\sqrt{2i}}}
\left(                                                                                       \begin{array}{cc}
                                                                                         1 & -i \\
                                                                                         1 & i \\
                                                                                       \end{array}
                                                                                     \right)
$ sends the matrices in $SL(2,\mathbb{R})$ to matrices that give isometries in the unit disc model and preserves traces.

           Since $X$ and ${\tilde{X}}$ induce the same isometry, we use them interchangeably.

           As  conjugates of $X$ and ${\tilde{X}}$  act as isometries in the unit disc model, we move from the upper-half-plane model to the unit disc model according to convenience using $X$  for the element acting in either model and $\rho$ for the metric in either model.

 We remind the reader of the classification of isometries acting on ${\mathcal{U}}$ {\sl algebraically} by the  absolute value of the  trace of the pull back.
  The matrix ${\tilde{X}}$ and the isometry induced by $X$ or $\tilde{X}$ is hyperbolic, elliptic or parabolic according to whether $|\Tr(\tilde{X})|$, the absolute value of the  trace of the matrix,  is $  > 2, \;\;  < 2 \;\; \mbox{or} \;\;  = 2$.

  If we begin with $A$ and $B$ in $PSL(2,\mathbb{R})$ and choose $\tilde{A}$ and $\tilde{B}$ to have positive trace, then the trace of every element $\tilde{X}$ in the group they generate is determined and we set $\Tr(X) = \Tr ({\tilde{X}})$. A transformation fixing $\mathbb{D}$ is hyperbolic, parabolic or elliptic according to whether its conjugate acting on ${\mathcal{U}}$ is.

If $X$ is hyperbolic, it fixes two points  on the boundary of the ${\mathcal{U}}$  and the hyperbolic geodesic connecting the two fixed points known as the axis of $X$ and denoted $Ax_X$. One of the fixed points is attracting and one is repelling. This means  that $X$ moves points along the axis of $X$ toward the attracting fixed point and it moves all points on $Ax_X$ a fixed distance in the non-Euclidean metric, its translation length, $\T_X$. The same statements apply to the conjugate of $X$ acting on $\mathbb{D}$.

Any two geodesics have a unique common perpendicular. If a geodesic is oriented, then with respect to the positive orientation, there is a left and a right of the geodesic.
\begin{defn}
  Conjugate so that $A$ and $B$ fix $\mathbb{D}$. We can orient $L$,  the common perpendicular to the axes of $A$ and $B$,  so that the positive direction is from the axis of $A$ towards the axis of $B$ (i.e.  moving in the positive direction along $L \cap Ax_A$ is encountered before $L \cap Ax_B$). If $\Tr(A) \ge \Tr(B) > 0$ and if the attracting fixed points of $A$ and $B$ lie to the left of $L$, then we say that the ordered pair $(A,B)$ is  {\sl coherently oriented}. After interchanging $A$ and $B$ and replacing $A$ and/or $B$ by their inverses, we may always assume that we have a coherently oriented pair. Given $A$ and $B$ exactly one of $(A^{\pm 1}, B^{\pm 1})$ and $(B^{\pm 1},A^{\pm 1})$ is a coherently oriented pair. We term this  the {\sl related coherently oriented pair}.
\end{defn}
\section{The main result}\label{section:main}

We let $G$ be the group generated by $A$ and $B$ both elements of $PSL(2,\mathbb{R})$ and assume throughout this paper that $G$ is non-elementary.

The results of \cite{GM, G2, G1, GKwords} can be summarized as
\begin{thm} \label{theorem:GMalg} {\rm \cite{GM, G2} {\bf (Geometric Algorithm)}}
 Let $A$ and $B$ be hyperbolic elements of $PSL(2,\mathbb{R})$. Interchange  $A$ and $B$ so that $\Tr(A) \ge \Tr(B) \ge 2$ and replacing $A$ and/or $B$ its inverse if necessary to assume that the pair $(A,B)$ is coherently oriented. There is a an integer $t$ and a set of positive integers
$[n_1,...,n_t]$ such that replacing the ordered pair $(A,B)$ by the sequence of ordered pairs of generators:
$$(A,B) \rightarrow  (B^{-1}, A^{-1}B^{n_1}) \rightarrow (B^{-n_1}A,
B(A^{-1}B^{n_1})^{n_2}) \rightarrow \ldots (C,D)$$
 after $t$ steps gives an ordered  pair of {\sl stopping generators}  $(C,D)$ and
 outputs $G$ is \\
(i)   {\sl discrete} \\
(ii)  {\sl not discrete}, or  \\
(iii) {\sl not free}.
\end{thm}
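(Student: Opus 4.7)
The plan is to view the algorithm as a Nielsen-reduction procedure in disguise: at each step, one performs a substitution $(X,Y)\mapsto(Y^{-1},X^{-1}Y^n)$ for an optimally chosen exponent $n$, and one tracks the trace (equivalently the translation length) of the generators as a monovariant. My proof would proceed in three stages: defining the substitution precisely, establishing termination via a strictly decreasing bounded-below quantity, and classifying the output at the stopping stage.

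First, I would define the integer $n_i$ at the $i$-th step as the unique integer (or one of two consecutive integers) minimizing $|\Tr(A_i^{-1}B_i^n)|$ over $n\in\mathbb Z$ for the current coherently oriented pair $(A_i,B_i)$. Using the translation-length identity $\cosh(\T_X/2)=|\Tr(X)|/2$ together with an explicit hyperbolic-trigonometric formula for $\Tr(A^{-1}B^n)$ in terms of $\T_A$, $\T_B$ and the hyperbolic distance $\delta$ between the axes, one sees that the optimal $n_i$ is well-defined and computable, and that the replacement $(A_i,B_i)\to(B_i^{-1},A_i^{-1}B_i^{n_i})$, after interchange and/or inversion to restore coherent orientation, yields a new coherently oriented pair of exactly the shape displayed in the theorem.

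Second, I would prove termination. Let $m_i=\min(\Tr(A_i),\Tr(B_i))$ and $M_i=\max(\Tr(A_i),\Tr(B_i))$. A key Euclidean-style lemma asserts that at each non-terminal step the lexicographic pair $(M_i,m_i)$ strictly decreases. Since traces of hyperbolic elements are bounded below by $2$, the sequence must either reach a stopping configuration in which no substitution of the above form further reduces the pair, or it must produce an element of trace strictly less than $2$ in finitely many steps. The former case corresponds to outputs (i) or (iii); the latter produces an elliptic and is handled in the classification step. This is the point at which the analogy with the classical Euclidean algorithm is sharpest: we are iteratively reducing a pair of translation lengths by an operation that mimics ``subtract $n_i$ copies of the shorter from the longer.'' At termination, the pair $(C,D)$ is Nielsen-reduced in the strong sense that no admissible substitution reduces the translation-length data. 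I would then invoke the Poincar\'e polygon theorem applied to the region bounded by the isometric circles of $C^{\pm 1}$ and $D^{\pm 1}$ together with the common perpendicular $L$: if this region is a valid fundamental polygon, the group is discrete and free; if a side-pairing relation forces a power of $C$, $D$, or a short product to be elliptic of finite order, the group is discrete but not free; if an elliptic of infinite order or a relation with irrational rotation number appears, the group fails to be discrete.

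The main obstacle I anticipate is the termination argument coupled with the correctness of the classification at the stopping stage. Showing rigorously that the lexicographic monovariant strictly decreases at every non-terminal step requires a careful case analysis based on whether the axes of $A_i$ and $B_i$ remain disjoint, on the sign and magnitude of $n_i$, and on the behavior of the common perpendicular under the substitution. Equally delicate is the bookkeeping of the coherent-orientation hypothesis through each iteration: after substitution and possible inversion and interchange, one must verify that the pair remains in the canonical form required to re-enter the inductive step, and that the three output conditions are mutually exclusive and exhaustive at the stopping stage.
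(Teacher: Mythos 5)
Your overall architecture---trace-minimizing substitutions, a decreasing monovariant, and a Poincar\'e-polygon classification at the stopping stage---matches the outline of the actual proof in the cited source \cite{GM}, but your termination step has a genuine gap. You argue that because the lexicographic pair $(M_i,m_i)$ strictly decreases and traces of hyperbolic elements are bounded below by $2$, the process must reach a stopping configuration or produce a trace below $2$ in finitely many steps. That inference fails for real-valued monovariants: a strictly decreasing sequence of traces all exceeding $2$ can be infinite, converging to a limit $\ge 2$ without ever producing an elliptic or a terminal configuration. This is precisely the distinction the paper draws between a \emph{procedure} and an \emph{algorithm}. The missing ingredient is J{\o}rgensen's inequality: so long as the current pair $(C_j,D_j)$ does not violate it, one shows that $\Tr(C_j)-\Tr(C_jD_j^{-1})$ is bounded below by the uniform positive constant $(\sqrt{2}-1)^2/\sqrt{2}$, so only finitely many reduction steps can occur before an elliptic appears, the three reflection lines $L$, $L_C$, $L_D$ bound a region, or J{\o}rgensen's inequality is violated. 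Without such a uniform lower bound on the decrement you cannot conclude termination, and you also lose one of the three exits: non-discreteness is certified by a J{\o}rgensen violation, not only by the appearance of an infinite-order elliptic or an irrational rotation, neither of which is detectable in finite time.

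Two smaller points. For the discreteness certificate, the paper's route is to pass to the index-two reflection group $\langle H_L, H_{L_A}, H_{L_B}\rangle$ and apply the Poincar\'e theorem to the region bounded by $L$, $L_A$, $L_B$, rather than to isometric circles of $C^{\pm 1}$ and $D^{\pm 1}$; your variant can be made to work but needs extra normalization and does not interact as cleanly with the factorization $A=H_L H_{L_A}$, $B=H_L H_{L_B}$ that drives the rest of the argument. And your bookkeeping of coherent orientation through each substitution is flagged as delicate but not carried out; this is a real obligation (it is established in \cite{GKwords}) since the inductive step silently assumes $\Tr(C_j)\ge\Tr(D_j)$ and the correct placement of attracting fixed points. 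Both of these are repairable; the termination argument is the essential defect.
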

We note that in the case that $G$ is not free, the GM algorithm determines discreteness or non-discreteness and finds stopping generators, but the sequence of integers $[n_1,...,n_t]$ must be modified (see section \ref{section:all}).
The sequence $[n_1,...,n_t]$ is used in the calculation of the computational complexity  \cite{G1,YCJ}.
\begin{defn} [\cite{G1,G, GKwords, YCJ}]
The sequence $[n_1,...,n_t]$ is termed the {F-sequence} or the
{\sl Fibonacci sequence} of the algorithm.
\end{defn}
\begin{rem}
Note that in the proof that there is a geometric  algorithm to determine discreteness or non-discreteness (\cite{GM}), the existence of such integers $n_i$ is demonstrated, but the integers themselves are not found. Further in the investigation of the computational complexity of the GM algorithm (\cite{G1}), upper bounds for the integers $n_i$ are found, but the integers themselves are not computed. In this paper, we actually compute the integers.
\end{rem}
\begin{rem} We remind the reader of the difference between an algorithm and a procedure.
An algorithm is a procedure that always stops and gives an answer.
     The proof of  the existence of a geometric algorithm (an algorithm as opposed to a procedure)  to determine discreteness or non-discreteness proceeds using J{\o}rgensen's inequality to obtain a positive lower bound for the difference between the trace of $A$ and that of $AB^{-1}$ when the pair $(A,B)$ is coherently oriented\cite{GM}. Thus in replacing a pair $(A,B)$ by $(B^{-1}, A^{-1}B^n)$ we can show that we have decreased the trace by a positive amount.     
      The concept of trace minimizing is due to Purzitsky and has been further developed by Purzitsky and Rosenberger
  \cite{P, PR}.
\end{rem}
     We first state the main result in the case that the initial and stopping generators are hyperbolic isometries with disjoint axes as this is the most complicated case of the algorithm and then give extensions allowing parabolics; in section \ref{section:all} we indicate how this extends to all other type of pairs of initial and stopping generators. We use $T_X$ and $K_X$ to denote the translation length and multiplier of an element $X \in PSL(2,\mathbb{R})$ or equivalently the translation length and multiplier of a lift of $X$ to $SL(2,\mathbb{R})$ as defined in section \ref{section:NOTe}, where $\Tr(X)$, the trace of $X$ or a lift is also defined.

Recall that a hyperbolic transformation has a {\sl  multiplier}.  We first note that there are several different commonly used definitions of the {\sl multiplier} in the literature, in particular  on page 13 of \cite{Fench}  $m$ is used and on page 5 of \cite{Maskit} where $k^2$ is used.
In the latter which considers complex M{\"o}bius transformations, $k^2$ is defined by
conjugating the fixed points $x$ and $y$ of the transformation to  $0$ and $\infty$ noting that the conjugated transformation $h$  satisfies: $ {\frac{h(z) -x}{h(z)-y}} = k^2{\frac{z-x}{z-y}}$ and defining $k^2$ to be the multiplier assuming $x \ne y$ so that the transformation is conjugate to $z \mapsto k^2z$, $z \in {\overline{\mathbb{C}}}$, with {\it $y$ the attracting fixed point and $x$ the repelling fixed point}.
In \cite{Fench} a transformation is considered to have two multipliers, $m$ and ${\frac{1}{m}}$ depending on whether $0$ or $\infty$ is used as the attracting fixed point the appropriate conjugate transformation.

Here we use $K$ to denote the multiplier.  Our $K$ is Maskit's $k^2$ and Fenchel's $m^{\pm1}$. Since we are in the real case, $K$ is real and we  may assume that $K =k^2 > 1 $. However, we do not need to distinguish between the two multipliers $m$ and ${\frac{1}{m}}$ in our results because in our calculations we always take the absolute value of the logarithm  of the multipliers. We have ${\tilde{X}}$ conjugate to $z \mapsto Kz$ for some real number $K>0$. We note that $|{\Tr(\tilde{X})}|= \sqrt{K} + \sqrt{K}^{-1}$. We write $K_X$ for the multiplier of $\tilde{X}$ and note that $T_X = T_{X^{-1}} = |\log K_X| $ via $\cosh {\frac{T_X}{2}} = {\frac{1}{2}} \Tr(X)$. Note also that $K_{X^{-1}} = K_X^{-1}$. We do note that the above facts imply  if an ordered pair $(A,B)$ is {\sl coherently oriented} then  $K_A >  K_B >1 $.

\begin{thm} \label{theorem:new}{\rm (hyperbolic-hyperbolic initial and stopping generators)}
Assume that  $A$ and $B$ are a pair of hyperbolics with disjoint axes and the algorithm stops with such a pair. Interchange $A$ and $B$ and replace $A$ and/or $B$ by their inverses as necessary so that we may assume that the initial ordered pair $(A,B)$ is coherently oriented.

If one applies the Euclidean algorithm
to the non-Euclidean translation lengths of the generators at each
step, the output is the F-sequence $[n_1,...,n_k]$.

\vskip .1in

In particular if the multiplier of $A$ is $K_A$ and the multiplier
of $B$ is $K_B$ with $K_A \ge K_B$, then

$$n_1= [{\frac{(|\log K_A|)/2}{(|\log K_B|)/2}}]$$ where $[\;\;]$ denotes the greatest integer function and $|\;\;|$ absolute value, \\ or equivalently if $T_X$ is the translation length of $X$:
$$n_1 = [{\frac{T_A/2}{T_B/2}}].$$
and
$$n_2 = [{\frac{T_B/2}{T_D/2}}]\;\;\; \mbox{where} \;\;\; D = A^{-1}B^{n_1}$$ and
$$n_j = [{\frac{T_{C_j}/2}{T_{D_j}/2}}]$$
where $(A,B)=(C_1,D_1)$ and $(C_j,D_j) = (D_{j-1}^{-1}, C_{j-1}^{-1}D_{j-1}^{n_j-1})$ is the ordered pair of generators at step $j$, $1 \le j \le t$ in Theorem \ref{theorem:GMalg}.
\end{thm}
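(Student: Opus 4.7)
The strategy is to reduce the theorem to a single-step assertion and then iterate. The single-step assertion is this: for a coherently oriented pair $(A, B)$ of hyperbolic isometries with disjoint axes, the integer $n_1$ produced by the GM algorithm satisfies $n_1 = \lfloor T_A/T_B \rfloor$. Granting this, Theorem \ref{theorem:GMalg} ensures that each successive pair $(C_j, D_j) = (D_{j-1}^{-1}, C_{j-1}^{-1}D_{j-1}^{n_{j-1}})$ is again a coherently oriented hyperbolic pair with disjoint axes (this uses the hypothesis that both the initial and stopping generators are of this type), so applying the single-step assertion at every stage yields $n_j = \lfloor T_{C_j}/T_{D_j} \rfloor$, which gives the theorem.

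For the single-step assertion, my plan is to invoke the hyperbolic trigonometric identity that expresses $\cosh(T_{A^{-1}B^n}/2)$, equivalently the trace of $A^{-1}B^n$, in terms of the translation lengths $T_A$ and $T_B$, the perpendicular distance $\delta$ between $Ax_A$ and $Ax_B$, and the integer parameter $n$. Viewed as a function of a continuous real parameter $n$, this quantity is strictly convex in $n$, with a unique real minimizer $n^{\ast}$. The geometric content is that $B^n$ is the power of $B$ whose shift along $Ax_B$ best matches the shift of $A$ along $Ax_A$, so $n^{\ast}$ lies near $T_A/T_B$.

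To pin down the algorithm's integer $n_1$ as $\lfloor T_A/T_B \rfloor$ rather than as the nearest integer to $n^{\ast}$, I would invoke the coherent-orientation convention. By construction of the GM step, $n_1$ is the unique integer for which (i) $|\Tr(A^{-1}B^{n_1})| < |\Tr(A)|$ and (ii) the new pair $(B^{-1}, A^{-1}B^{n_1})$ is coherently oriented. Condition (ii) translates, after tracking the attracting fixed points and the new common perpendicular in the upper half-plane model, into the double inequality $n_1 T_B \le T_A < (n_1 + 1)T_B$, which is exactly $n_1 = \lfloor T_A/T_B \rfloor$. The equivalence with $\lfloor(|\log K_A|)/(|\log K_B|)\rfloor$ then follows from the identity $T_X = |\log K_X|$ recalled in section \ref{section:NOTe}.

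The main obstacle I expect is the orientation bookkeeping in the single-step argument: a naive trace-minimization argument would produce the nearest integer to $T_A/T_B$, so obtaining the floor requires careful attention to which fixed points are attracting and on which side of which perpendicular, and to the asymmetry introduced by the assumption $T_A \ge T_B$. Handling the boundary case $T_A/T_B \in \mathbb{Z}$, where the remainder vanishes and the algorithm may terminate, and verifying that the coherent-orientation hypothesis genuinely propagates through all $t$ steps, are secondary technical points that should be manageable once the single-step geometric picture is correctly drawn.
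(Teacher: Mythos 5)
Your global structure (prove a one-step identity, then iterate using the fact that the algorithm keeps returning coherently oriented pairs of the same type) matches the paper's, and that reduction is fine. The gap is in the one-step assertion, which carries all of the mathematical content: the double inequality $n_1 T_B \le T_A \le (n_1+1)T_B$ is never proved in your sketch. You assert that coherent orientation of the new pair ``translates'' into that inequality after tracking attracting fixed points, but that translation is exactly the theorem, and the characterization of $n_1$ you base it on is not the algorithm's. In the GM algorithm $n_1$ is defined geometrically as the number of perpendiculars $L_{B^q}$ (the reflection lines with $B^q = H_L H_{L_{B^q}}$, spaced $T_B/2$ apart along $Ax_B$) that separate $L$ from $L_A$; coherent orientation of the successive pairs is a \emph{consequence} of the algorithm proved elsewhere (cited to Gilman--Keen), not a defining condition, and your uniqueness claim for the integer satisfying (i) and (ii) is neither the definition nor obviously true --- the set of $n$ with $0 < \Tr(A^{-1}B^{n}) \le \Tr(B)$ can a priori contain more than one integer.

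The analytic engine you propose also does not do what you need. Writing $a=T_A/2$, $b=T_B/2$ and $\delta$ for the distance between the axes, the relevant trace function is $f(n)=\cosh a\cosh(nb)-\cosh\delta\,\sinh a\,\sinh(nb)$ (up to sign conventions). It satisfies $f''=b^2 f$, so it is not globally convex; in the regime of interest it is typically monotone decreasing through the elliptic band, and when it does have a critical point it sits where $\tanh(nb)=\cosh\delta\,\tanh a$ --- a location that depends on $\delta$ and is \emph{not} near $T_A/T_B$. So the ``convex with minimizer near $T_A/T_B$'' picture is wrong, and completing your plan would not yield the floor formula. The paper instead proves Lemma \ref{lem:main} synthetically: consecutive lines $L_{B^q}$ meet $Ax_B$ at spacing exactly $T_B/2$, distances along a geodesic are additive, and the common perpendicular realizes the minimum distance between disjoint geodesics; comparing the resulting distances along $Ax_A$ and along $Ax_B$ gives $nT_B/2\le T_A/2\le (n+1)T_B/2$ directly, with the equality cases dispatched because they force an elliptic element. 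You would need to replace your trace-minimization step with this (or an equivalent) distance comparison.
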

Now we extend this to statements of the theorem if the algorithm encounters parabolics.  Note that a parabolic isometry does not have a multiplier. For any $X$ and $Y$,  let $[X,Y]$ be their multiplicative commutator.
\begin{thm}\label{theorem:pars} {\rm (parabolic elements)}

 Assume that at step $j$, $C_j$ is hyperbolic with  $D_j$  parabolic. Then

$$n_j = [ { \frac{\Tr (C_j)-2}{ {\sqrt{|{\Tr ([C_j,D_j])-2}|} } }} ].$$
or equivalently, $$n_j = [ {\frac{2 \cosh({\frac{T_{C_j}}{2}})-2}{\sqrt{2\cosh{( {\frac{T_{[C_j,D_j]}}{2}})-2} }}} ].$$
 If  $C_{j}$ and $D_{j}$ are both parabolic,  $n_j =1$.
 \end{thm}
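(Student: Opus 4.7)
The plan is to prove both forms of the displayed formula by a direct matrix computation, together with the trace-minimizing criterion that defines $n_j$ in the GM algorithm (cf.\ the Remark after Theorem \ref{theorem:GMalg}).

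First normalize: conjugate so that the parabolic $D_j$ fixes $\infty$, giving $D_j = \begin{pmatrix} 1 & s \\ 0 & 1 \end{pmatrix}$ with $s > 0$ (the sign being fixed by coherent orientation). Write $C_j = \begin{pmatrix} a & b \\ c & d \end{pmatrix} \in SL(2,\mathbb{R})$; non-elementarity of the group forces $c \neq 0$ since $C_j$ cannot share the fixed point $\infty$ of $D_j$. A routine matrix multiplication then yields
$$\Tr(C_j^{-1} D_j^n) \;=\; \Tr(C_j) - nsc \quad \text{and} \quad \Tr([C_j, D_j]) \;=\; 2 + (sc)^2,$$
so that $\sqrt{|\Tr([C_j, D_j]) - 2|} = |sc|$ and the right-hand side of the theorem equals $\lfloor (\Tr(C_j) - 2)/|sc| \rfloor$.

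The next step is to identify this floor with $n_j$ via the GM algorithm's trace-minimization rule: $n_j$ is the largest positive integer for which the new pair $(D_j^{-1}, C_j^{-1}D_j^{n_j})$ is coherently oriented with $\Tr(C_j^{-1}D_j^{n_j}) \geq 2$. Substituting $\Tr(C_j^{-1}D_j^n) = \Tr(C_j) - nsc$ and using $sc > 0$ (from coherent orientation) converts this into $n \leq (\Tr(C_j) - 2)/|sc|$, producing exactly $n_j = \lfloor (\Tr(C_j) - 2)/|sc| \rfloor$. The equivalent $\cosh$ form follows immediately from $\Tr(X) = 2\cosh(T_X/2)$ applied to $C_j$ and to the commutator $[C_j,D_j]$ (which is hyperbolic since $(sc)^2 > 0$).

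For the parabolic-parabolic subcase, conjugate so that $D_j$ fixes $\infty$ and $C_j$ fixes a finite real point; both have trace $2$. The same matrix calculation gives $\Tr(C_j^{-1}D_j^n) = 2 - nsc$, and coherent orientation forces $sc > 0$. Thus already $n = 1$ strictly decreases the trace below $2$, producing either an elliptic element (which triggers the non-discrete or non-free output) or a hyperbolic element with negative trace, and no $n \geq 2$ could remain within the coherent-orientation window. Hence $n_j = 1$.

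The main obstacle will be the identification step just described: pinning down the precise coherent-orientation / trace-decrease criterion that characterizes $n_j$ in the presence of a parabolic, since the Euclidean ratio $T_{C_j}/T_{D_j}$ of Theorem \ref{theorem:new} degenerates to $\infty$ when $T_{D_j} = 0$. The insight to be justified is that $|sc| = \sqrt{|\Tr([C_j,D_j]) - 2|}$ is the correct replacement for $T_{D_j}$: it measures the effective horocyclic shift of $D_j$ as seen from $C_j$, and so plays the role in the non-Euclidean Euclidean algorithm that the translation length of $D_j$ plays in the hyperbolic-hyperbolic case. Once this geometric interpretation is made precise, the algebraic identification is routine.
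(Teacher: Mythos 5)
Your proposal is correct and follows essentially the same route as the paper: the paper's proof (via Lemma \ref{lem:mainpar}) likewise normalizes so the parabolic is an upper-triangular translation, computes that each linear step reduces the trace of the hyperbolic element by a fixed amount equal to $\sqrt{|\Tr([C_j,D_j])-2|}$, identifies $n_j$ as the number of such reductions before the trace drops to $2$, and disposes of the parabolic--parabolic case by noting the product is immediately elliptic, parabolic, or hyperbolic so that $n_j=1$. Your explicit matrix verification of $\Tr(C_j^{-1}D_j^{\,n})=\Tr(C_j)-nsc$ and $\Tr([C_j,D_j])=2+(sc)^2$ is the same calculation the paper performs in its own normalization.
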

 If one of $C_j$ or $D_j$ is elliptic, the group is either not free or not discrete.
\section{Further Notation, Terminology  and Preliminaries } \label{section:NOT}
We follow the notation of \cite{Fench}. For  $x$ and $y$ distinct points in  ${\overline{\mathbb{D}}}$, the closure of $\mathbb{D}$, let $[x,y]$ denote the unique geodesic through $x$ and $y$ interior to  ${\overline{\mathbb{D}}}$.  If $a$ and $b$ are the points where ${\overline{[x,y]}}$, the closure of $[x,y]$ intersects the boundary of $\mathbb{D}$, $a$ and $b$ are called the {\sl ends} of $[x,y]$.
\subsection{Factorization}\label{section:factor}
If $M$ is a geodesic in $\mathbb{D}$, we let $H_M$ denote reflection in $M$, that is the element of order two  that fixes $M$ and its ends.  Now any M{\"o}bius transformation, $X$, can be factored in many ways as the product of two reflections.  In the hyperbolic case this is as the product of any two reflections  about perpendiculars to $Ax_X$ that intersect $Ax_X$ at a distance of ${\frac{\T_X}{2}}$ apart.

If $A$ and $B$ are any two isometries of $\mathbb{D}$ (or equivalently $\mathcal{U}$), their axes have a common perpendicular $L$ and there are lines $L_A$ and $L_B$ such that
$A= H_L \circ H_{L_A}$ and $B = H_L \circ H_{L_B}$. The reflection in  $L$ will fix the axis of $A$ and interchange it ends (see Figure \ref{figure:Initial Axes}). Similarly, reflection in $L$ will fix the axis of $B$ and interchange its ends. The group $G = \langle A, B \rangle$ is a subgroup of $\langle L, L_A, L_B \rangle$ of index two so that both groups are simultaneously discrete or non-discrete or not free.

The distance between $L$ and $L_B$ along the axis of $B$ is $T_B/2$. We consider $L_{B^2}, L_{B^3}, ...., L_{B^{r-1}}, L_{B^r}$ successive perpendiculars to the axis of $B$ that are each a distance $T_B/2$ from the preceding one along the axis of $B$ with $L_{B^q}$ separating $L$ and $L_{B^{q+1}}$ for each integer $1 < q < r$. We let $L_B= L_{B^1}$. Then  $H_L \circ H_{L_{B^q}} = B^q$. We further  note that $B$ can be factored as $H_{L_{B^q}}H_{L_{B^{q+1}}}$ for any integer $q$ and that  $H_{L_{B^q}} \circ H_{L_{B^s}} = B^{q-s}$ for integers $q$ and $s$.

The fixed point of an elliptic element lies interior to $\mathbb{D}$ and that of a parabolic lies on the boundary of $\mathbb{D}$. These points are considered the axes of the elliptic or parabolic. Elliptic and parabolic elements can also be factored as products of reflections that fix their axes.
\subsection{Conventions for Figures}
Our results are stated in terms of matrices in $SL(2,\mathbb{R})$ and $PSL(2,\mathbb{R})$. However, figures are drawn in $\mathbb{D}$ where the symmetry is more apparent.
 Note that figures are {\sl schematic} drawings. All lines shown interior to the disc represent geodesics, that is, arcs of Euclidean circles perpendicular to unit disc. The intersection a of geodesic with the boundary of the unit disc is understood to be perpendicular but is not marked as such in order to avoid clutter. However, interior to the disc, perpendicular intersection points of two geodesics  are marked with a blue circle. Usually intersections between a solid and a dotted line of the same color are perpendicular and are marked as such, but intersections between  other lines of different colors whether they are solid or dotted are not assumed perpendicular unless they are marked as such.
\begin{figure}[t] \begin{center}
\includegraphics[width=6cm,keepaspectratio=true]{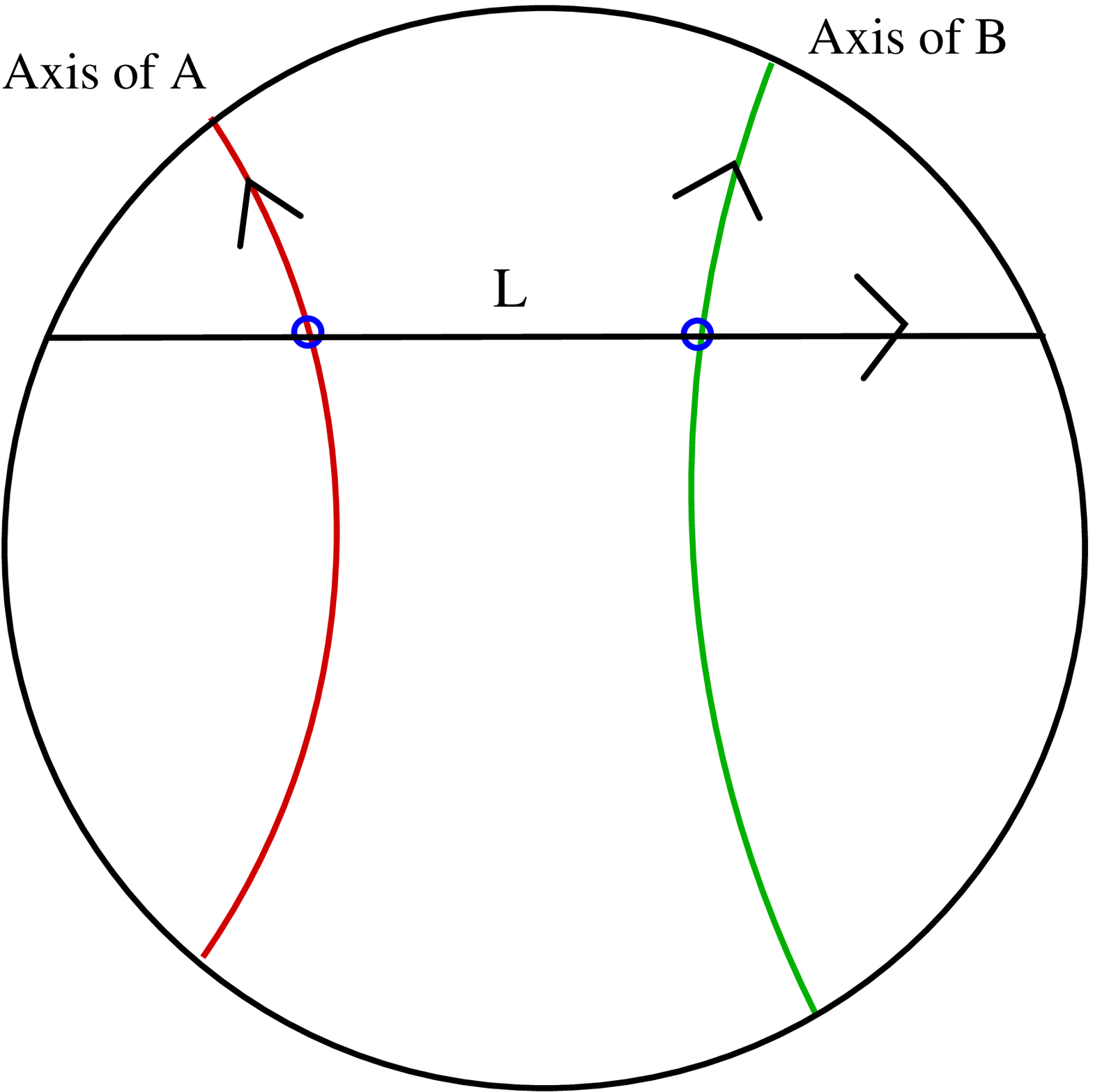}
\includegraphics[width=6cm,keepaspectratio=true]{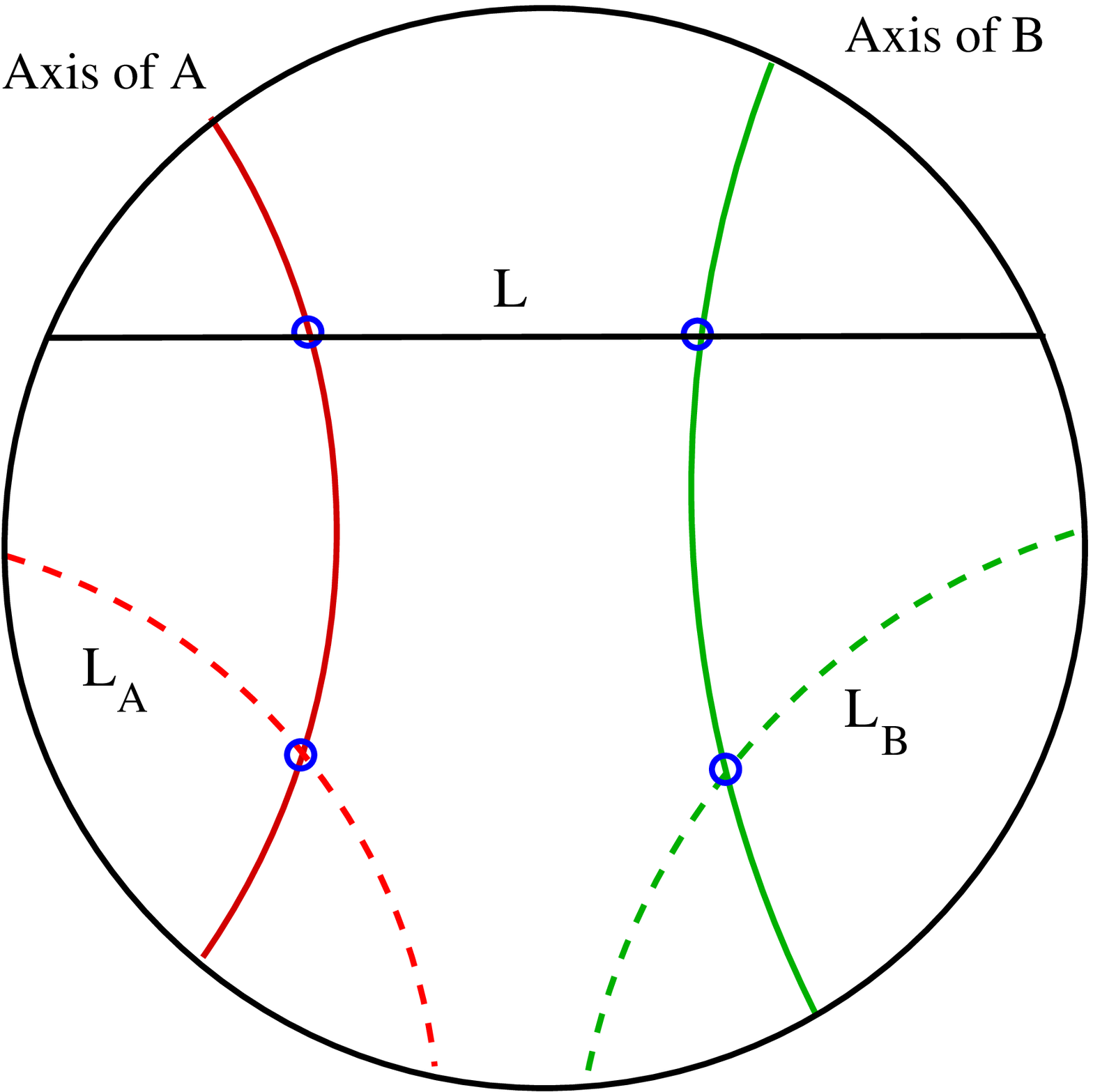}
\vskip .2in
\includegraphics[width=6cm,keepaspectratio=true]{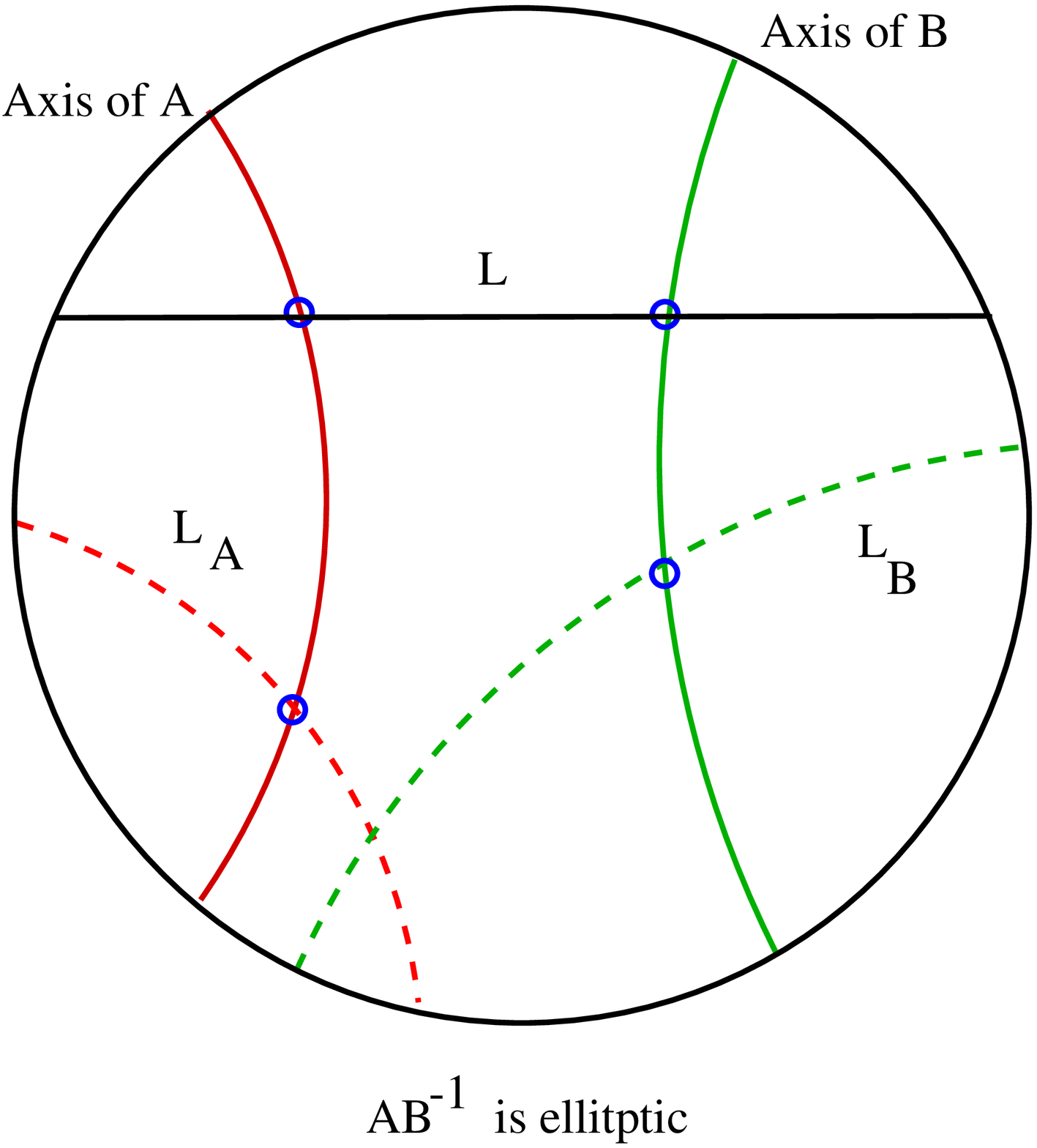}
\includegraphics[width=6cm,keepaspectratio=true]{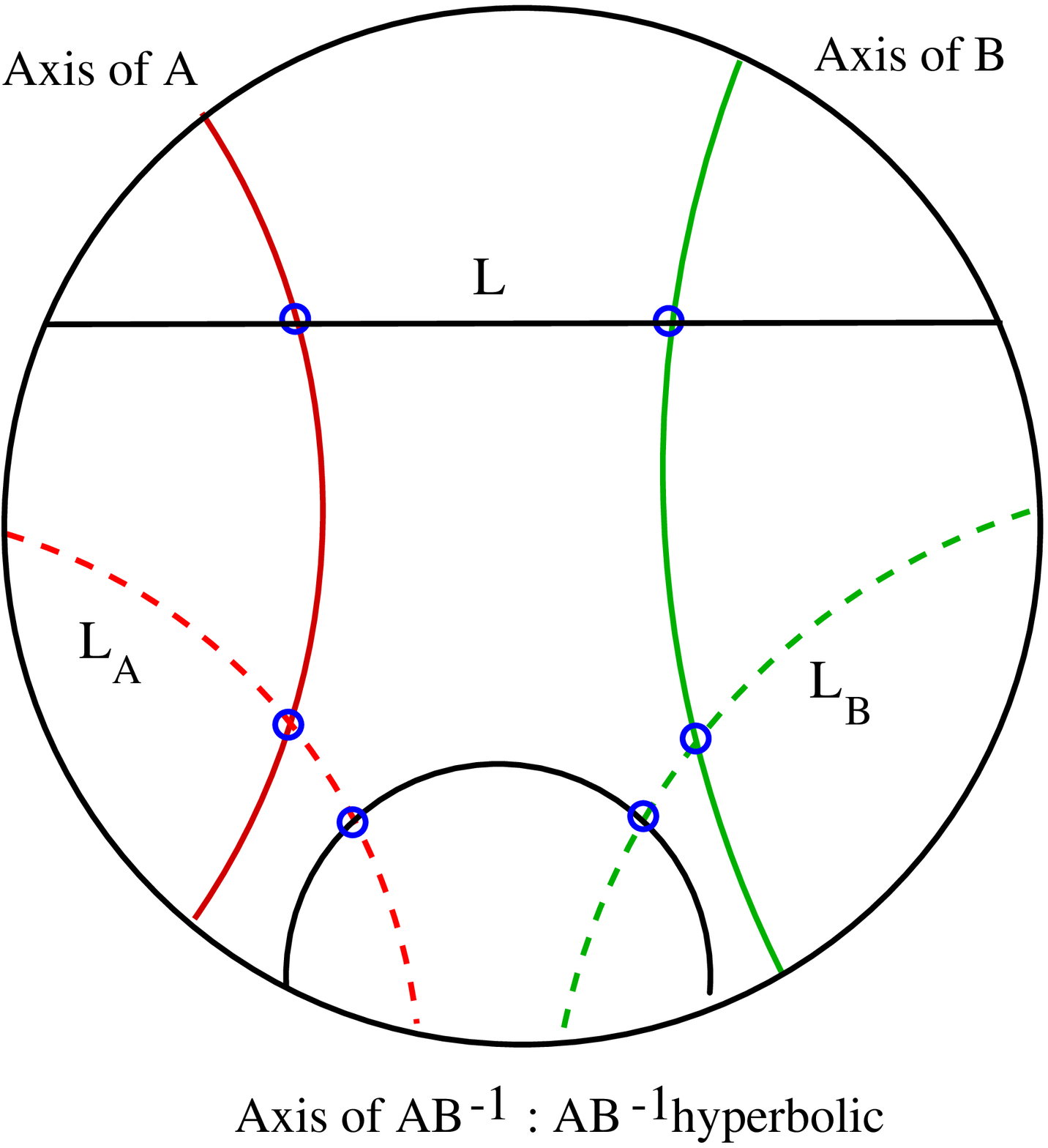}

\caption{Axes of $A$ and $B$ and their common perpendicular $L$ and some possible configurations for the $L_A$ and $L_B$ lines. In the second case the group will be discrete and free. In the third case,  $A^{-1}B$ is elliptic and in the last figure the axes of $AB^{-1}$, the common perpendicular to $L_A$ and $L_B$,  is shown.}
\label{figure:Initial Axes} \end{center} \end{figure}
\begin{figure}[t]
\begin{center}
\includegraphics[width=6cm,keepaspectratio=true]{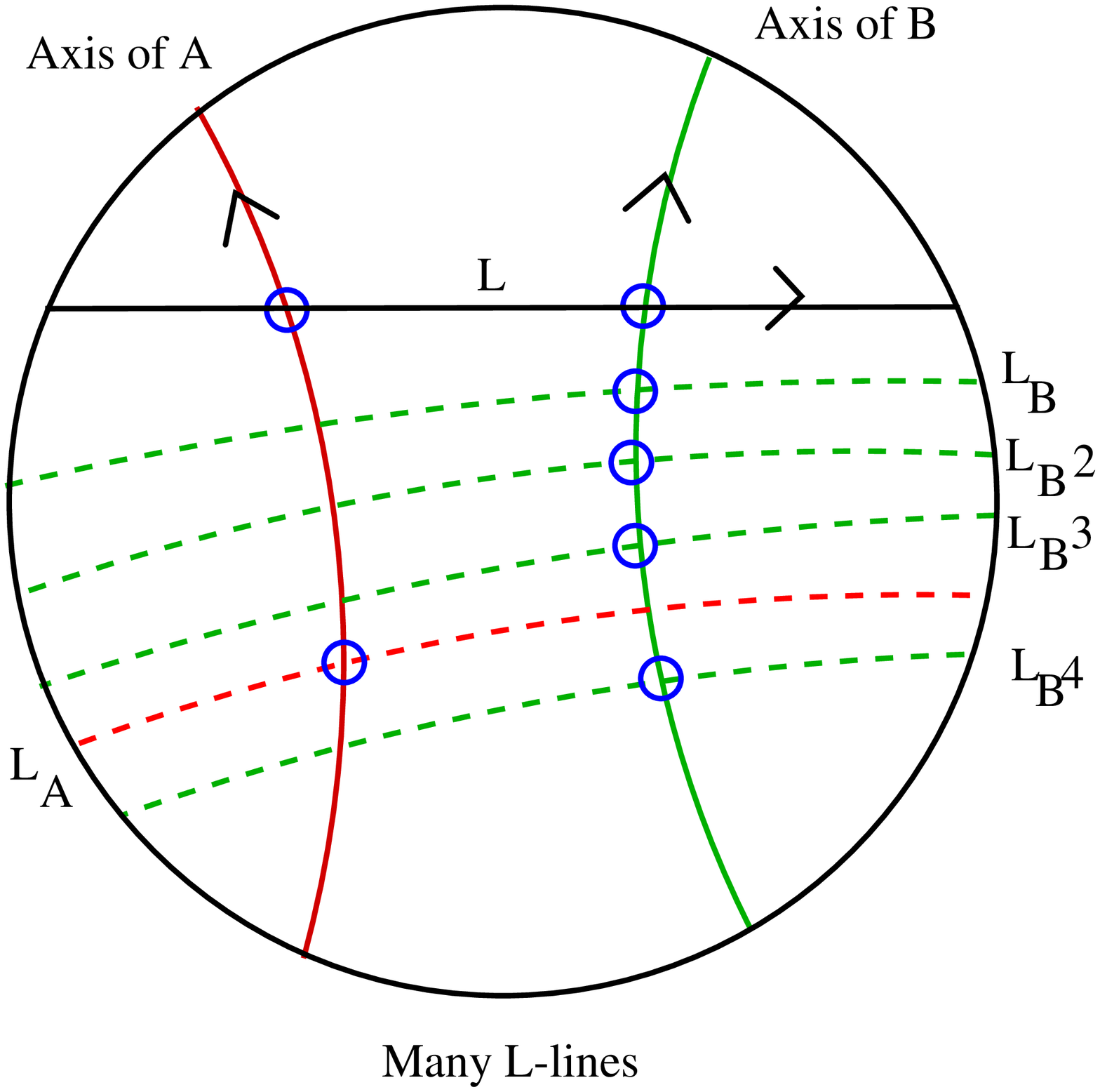}
\includegraphics[width=6cm,keepaspectratio=true]{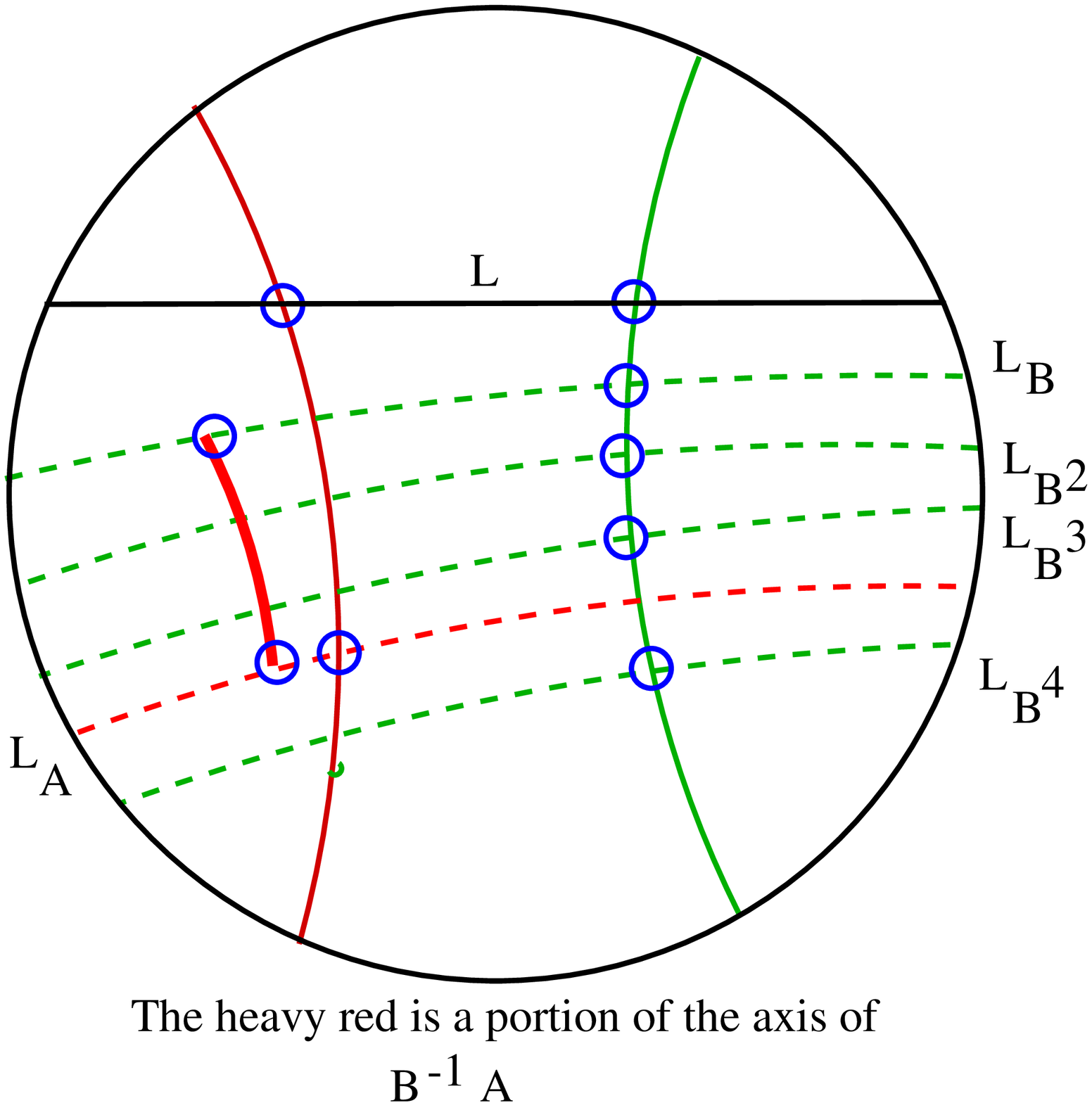}
\vskip .2in
\includegraphics[width=6cm,keepaspectratio=true]{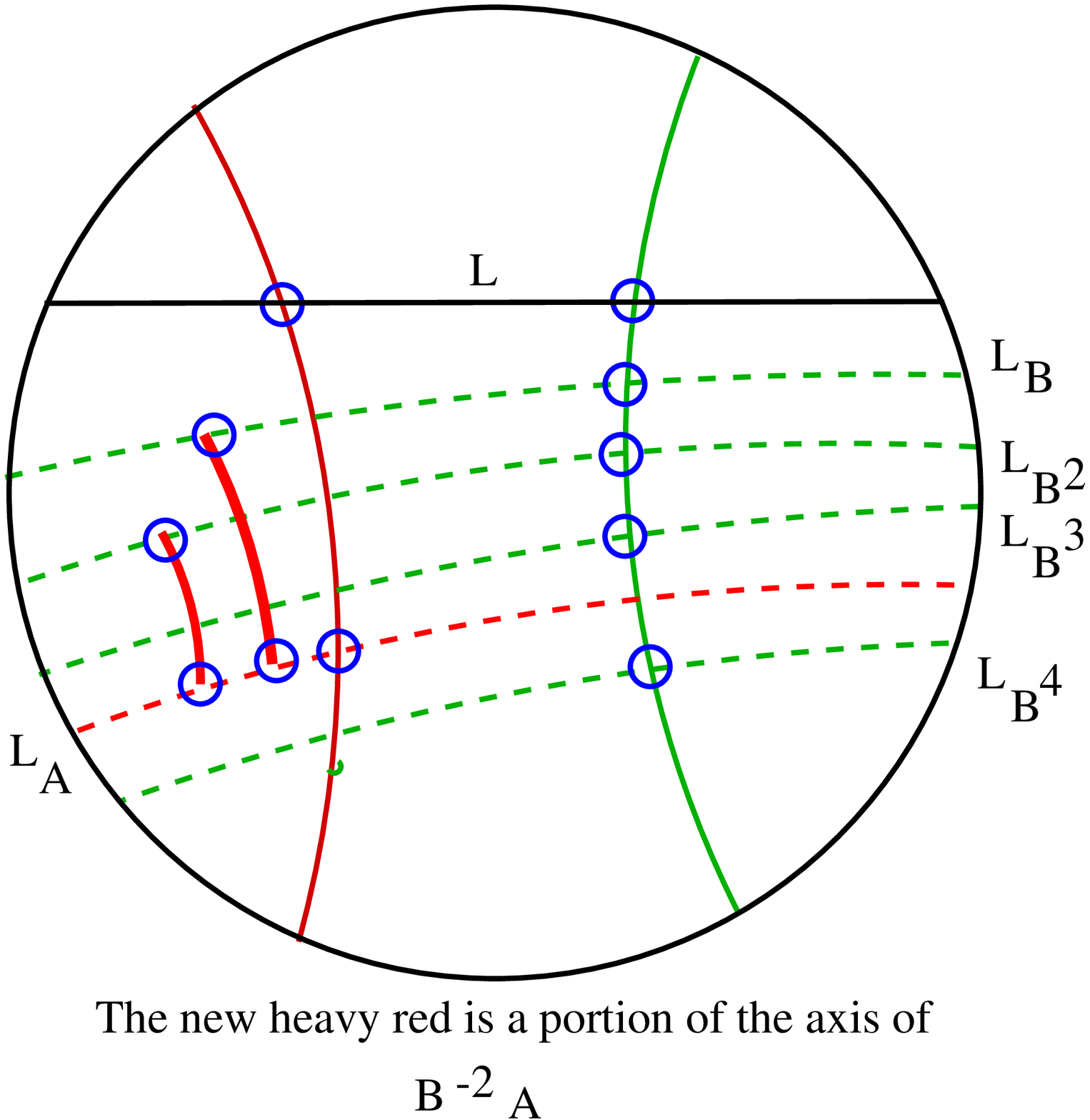}
\includegraphics[width=6cm,keepaspectratio=true]{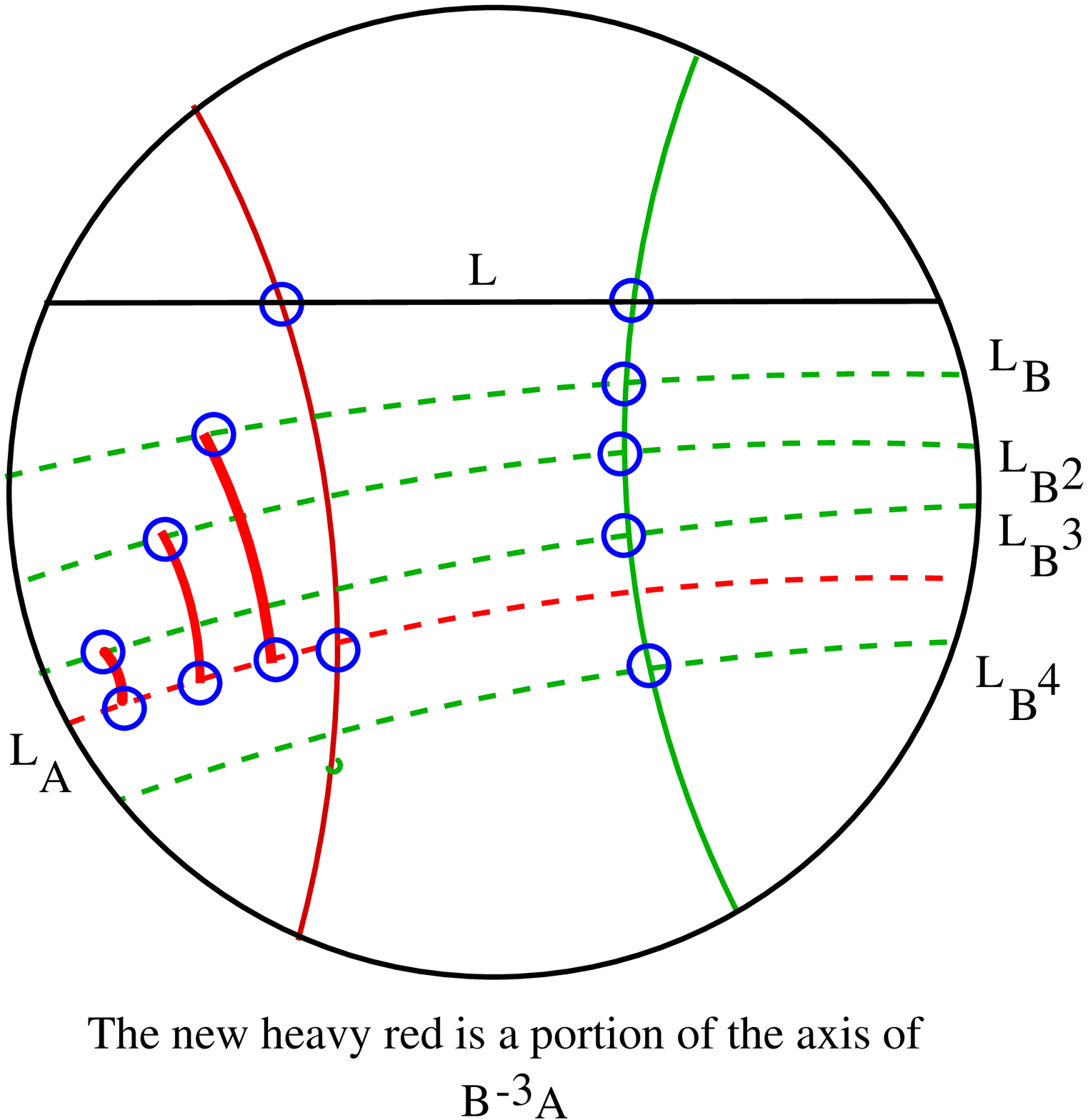}
\caption{A number of $L_B$ lines with $L_A$ intersecting the axis of $B$. Dotted green lines represent $L_B$, $L_{B^2}$, $L_{B^3}$,$L_{B^4}$ respectively. Axes of $A$, $B^{-1}A$, $B^{-2}A$,and $B^{-3}A$ are solid red lines.}
\label{fig:manyL}
\end{center}
                        \end{figure}
\subsection{The Barebones Algorithm} \label{section:barebones}
Assume we begin with $A$ and $B$ hyperbolics with disjoint axes and orient $L$  so that it  points from  $Ax_A$ to $Ax_B$ when $\Tr (A) \ge \Tr (B) > 0 $ and the attracting fixed points of $A$ and $B$ are both to the left of $L$. When we have such a configuration we say that $A$ and $B$ are coherently oriented. If $A$ and $B$ are not coherently oriented, we  obtain a coherently ordered pair after possibly interchanging $A$ and $B$, swapping $A$ with $A^{-1}$ and/or $B$ with $B^{-1}$. Theorem \ref{theorem:GMalg} assumes the initial $A$ and $B$ are coherently oriented and but it is shown in \cite{GKwords} that the sequence of pairs of generators given by the GM algorithm are coherently oriented so that no further  swapping or interchanging needs to be done.

The algorithm considers the possible configurations for $L_A$ and $L_B$. For example if these two geodesics  intersect interior to $\mathbb{D}$ or they have a common end, then $A^{-1}B$ is either elliptic or parabolic and the algorithm moves up into what is considered to be  an {\sl easier} case, where the generating pair is either a  hyperbolic-elliptic pair or a  hyperbolic-parabolic pair. If the lines are disjoint and no one separates the other from $L$, then the three axis $L$, $L_A$ and $L_B$ bound a region and the algorithm stops indicating  that the group is discrete.

In Figure \ref{figure:Initial Axes} we show some, but not all, of the possible configurations for the $L_B$ and $L_A$ lines. We emphasize those that are relevant to our discussion. for example, if $L_A \cap L_B \ne \emptyset$ and lies interior to the unit disc, then $A^{-1}B$ is elliptic so the group even if discrete, is not free. If these lines intersect on the boundary of the disc, then $A^{-1}B$ is parabolic.

If $L_B$ intersects $Ax_A$ and $L_A$ intersects $Ax_B$, we need to know for how many positive integers $q$ does $L_{B^q}$ intersect $Ax_A$ between $L$ and $L_A$. 
 Analysis of this situation is carried out in Section \ref{section:mainlemma}
(see Lemma \ref{lem:main}). This is termed the {\sl really bad case} and is the focus of most of our attention. Figure \ref{fig:manyL} illustrates the case when $q=3$ and shows portions of the axes of $B^{-1}A, B^{-2}A, \mbox{ and } B^{-3}A$. The new ordered pair in the algorithmic sequence would be $(B^{-1}, A^{-1}B^3)$.

\subsection{The algorithmic path}\label{section:path}
The algorithm considers the different possible types of generating pairs and breaks them down into two cases. One is the case where both generators are hyperbolic and their axes intersect \cite{G2}. The other case where the axes of the hyperbolic are disjoint is known as the {\sl intertwining} case \cite{GM}.

In the intertwining case, the algorithm tests the generators for discreteness and non-discreteness. If neither is found to be true, the algorithm  declares the pair indeterminate
and produces a {\sl next pair} of generators. We have $(C_1,D_1) = (A,B)$. The algorithm returns  {\sl coherently oriented} pairs as the next pair so that we always have $\Tr (C_j) \ge \Tr (D_j)$ at the $j$-th step. An implementation of the GM algorithm  will begin and end with a pair in any part of the  possible paths given below, but the {\sl next pair}  will  follow the path staying stationary or moving to the right, allowing at most a  finite number of repetitions of a given pair-type before it moves to the right.

 Note that in the GM algorithm we look at $$(A,B), (B^{-1}A,B), (B^{-2}A,B),\dots,  (B^{-(n-1)}A,B)$$ and then move to $(B^{-1},A^{-1}B^n)$. A step of the form $(A,B) \rightarrow  (B^{-1}A,B)$ is known as a linear step and one of the form $(A,B) \rightarrow (B^{-1}, A^{-1}B)$ is known as a Fibonacci step. However, here we note that we can (and do) shorten the algorithm by omitting
 the linear steps. This is done once we have computed the number of $n-1$ of linear steps before a Fibonacci step occurs.

 Let $H$,$P$ and $E$ denote respectively  a hyperbolic, parabolic, or elliptic generator. The types of pairs in the intertwining algorithmic  path are:
        \begin{enumerate}
\item
$H \times H \rightarrow H \times E \rightarrow P \times  E  \rightarrow E \times E$
\item
$H \times H \rightarrow H \times P \rightarrow H \times E \rightarrow  P \times E \rightarrow E \times E$
\item
$H \times H \rightarrow H \times P \rightarrow P \times  E  \rightarrow E \times E $
\item
$H \times H \rightarrow H \times P \rightarrow P \times  P  \rightarrow P \times E  \rightarrow E \times E$
\end{enumerate}
\section{Configurations and Distances: Main Lemma} \label{section:mainlemma}

We turn our attention to the {\sl really bad} case.
\begin{lem} \label{lem:main} {\rm (Main Lemma)}
If $(A,B)$ satisfy  $\Tr (A) \ge \Tr(B) >2$,  $L_A \cap Ax_B \ne \emptyset$, and $L_B \cap AX_A \ne \emptyset$. Let $L$ be the common perpendicular to $Ax_A$ and $Ax_B$. Let $L_A$ and $L_{B^r}$ be the geodesics so that $A=H_LH_{L_A}$ and $B^r=H_LH_{L_{B^r}}$. Let $n$ be the smallest integer so that $L_{B^n}$ separates $L$ and $L_A$ but $L_{B^{n+1}}$ does not. Then
$$n{\frac{T_B}{2}} \le {\frac{T_A}{2}} \le (n+1) {\frac{T_B}{2}}.$$
\end{lem}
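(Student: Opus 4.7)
The plan is to base everything on a standard hyperbolic-trigonometry identity for right-angled (Lambert) quadrilaterals. First I would place the configuration in the upper half-plane with the common perpendicular $L$ as the positive imaginary axis; then $Ax_A$ and $Ax_B$ are concentric Euclidean semicircles, which makes the feet $p_0 = L \cap Ax_A$ and $q_0 = L \cap Ax_B$ transparent, and the common perpendicular has length $h = d(p_0, q_0)$. Each axis is then parametrized by arc length from its intersection with $L$, and the perpendiculars $L_{B^k}$ are dropped at the marked points $q_k$ at distance $k\,T_B/2$ from $q_0$ along $Ax_B$.

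The key identity I would exploit is the Lambert-quadrilateral identity. Applied to the quadrilateral with vertices $p_0$, $q_0$, $q_n = L_{B^n} \cap Ax_B$, and $p_n = L_{B^n} \cap Ax_A$ (right angles at the first three vertices, acute at $p_n$), it reads
\[
\tanh d(p_0, p_n) \;=\; \cosh(h)\,\tanh(n\,T_B/2).
\]
Because $\cosh h > 1$, this yields the strict inequality $d(p_0, p_n) > n\,T_B/2$. Applying the same identity to the ``dual'' quadrilateral $p_0, q_0, q^* = L_A \cap Ax_B, p^* = L_A \cap Ax_A$ (right angles at $p_0, q_0, p^*$) gives $\tanh d(q_0, q^*) = \cosh(h)\,\tanh(T_A/2)$ and hence $d(q_0, q^*) > T_A/2$. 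Both hypotheses $L_A \cap Ax_B \neq \emptyset$ and $L_B \cap Ax_A \neq \emptyset$ guarantee that these quadrilaterals are non-degenerate.

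With these inequalities in hand, I translate the separation condition into metric form. That $L_{B^n}$ separates $L$ from $L_A$ forces $Ax_A$ (which runs from $L$ at $p_0$ to $L_A$ at $p^*$) to cross $L_{B^n}$, so $p_n$ lies on the segment from $p_0$ to $p^*$, giving $d(p_0, p_n) \le T_A/2$. Combined with the first Lambert inequality, this yields the left-hand inequality
\[
n\,T_B/2 \;<\; d(p_0, p_n) \;\le\; T_A/2.
\]
Dually, that $L_{B^{n+1}}$ does not separate $L$ from $L_A$ forces $q_{n+1}$ to lie beyond $q^*$ on $Ax_B$, so $(n+1)\,T_B/2 > d(q_0, q^*)$. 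Combined with the second Lambert identity, this produces the right-hand inequality
\[
T_A/2 \;<\; d(q_0, q^*) \;<\; (n+1)\,T_B/2.
\]

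The main technical obstacle will be justifying that the single integer $n$ in the lemma is characterized simultaneously by both the $Ax_A$-crossing condition (needed for the left inequality) and the $Ax_B$-crossing condition (needed for the right inequality). I would handle this by exploiting the really-bad-case configuration: the endpoints of the perpendiculars $L_{B^k}$ on $\partial\mathbb{D}$ move monotonically as $k$ increases, and in tandem with the hypotheses $L_A \cap Ax_B \neq \emptyset$ and $L_B \cap Ax_A \neq \emptyset$ the interleaving of the endpoint pairs of $L_{B^k}$ and $L_A$ shows that the separating indices form a single initial block $\{1, 2, \ldots, n\}$ and that the transition at $k = n+1$ is detected identically by either axial criterion. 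Once this dual characterization of $n$ is in place, the two Lambert identities pair off to give both halves of the asserted inequality.
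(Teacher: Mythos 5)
Your proof is correct and follows essentially the same route as the paper's: both arguments compare the segments cut out on $Ax_A$ and $Ax_B$ by $L$, $L_A$, and the perpendiculars $L_{B^k}$, deriving the left inequality from the ordering of feet along $Ax_A$ and the right inequality from the ordering along $Ax_B$. The only difference is cosmetic: where you invoke the quantitative Lambert-quadrilateral identity $\tanh d(p_0,p_n)=\cosh(h)\tanh(nT_B/2)$, the paper uses the softer principle that the common perpendicular realizes the minimal distance between two disjoint geodesics, together with additivity of length along a geodesic (and, like you, it defers the degenerate coincidences of intersection points to the elliptic/non-free case).
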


\begin{lem}   \label{lem:PPless} Assume $(A,B)$ satisfy  $\Tr (A) \ge \Tr(B) >2$ and  $L_A \cap Ax_B =  \emptyset$, but $L_B \cap AX_A \ne \emptyset$. Let $L$ be the common perpendicular to $Ax_A$ and $Ax_B$. Let $n$ be the smallest integer such that $L_{B^n}$ separates $L$ and $L_A$, but $L_{B^{n+1}}$ does not.   Then  $$n{\frac{(T_B)}{2}} \le {\frac{T_A}{2}}.$$
\end{lem}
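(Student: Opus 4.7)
The plan is to combine two elementary hyperbolic facts. First, the separating hypothesis places the intersection point of $L_{B^n}$ with $Ax_A$ inside the sub-arc of $Ax_A$ of length $T_A/2$ running between the feet of $L$ and $L_A$. Second, any two points, one on $L$ and one on $L_{B^n}$, are at hyperbolic distance at least the common perpendicular distance $n T_B/2$. The hypothesis $L_A \cap Ax_B = \emptyset$ is not used for the lower bound; it is precisely the reason that the matching upper bound $T_A/2 \le (n+1) T_B/2$ from the Main Lemma is absent here.

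For the first step, let $P_0 := L \cap Ax_A$ and $R := L_A \cap Ax_A$; by construction $d(P_0, R) = T_A/2$ along $Ax_A$. Since $L$ and $L_A$ both meet $Ax_A$ perpendicularly at these two points and $L_{B^n}$ separates $L$ from $L_A$ in the hyperbolic plane, the geodesic segment on $Ax_A$ from $P_0$ to $R$ must cross $L_{B^n}$. Call the crossing $P_n := L_{B^n} \cap Ax_A$ and set $s := d(P_0, P_n)$; then immediately $s \le T_A/2$.

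For the key step, observe that $L$ and $L_{B^n}$ are both perpendicular to $Ax_B$, so they are ultra-parallel and their common perpendicular is the sub-arc of $Ax_B$ between $L \cap Ax_B$ and $L_{B^n} \cap Ax_B$, of length $n T_B/2$. A standard fact in hyperbolic geometry is that the minimal distance between two ultra-parallel geodesics equals the length of their common perpendicular; that is, $d(x,y) \ge n T_B/2$ for every $x \in L$ and $y \in L_{B^n}$. Applying this with $x = P_0$ and $y = P_n$ yields $s \ge n T_B/2$.

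Chaining the two inequalities gives $n T_B/2 \le s \le T_A/2$, which is the claim. The one point needing care is the direction of separation: one must verify that $L_{B^n}$ crosses $Ax_A$ on the sub-arc from $P_0$ to $R$ rather than on the complementary ray. This is forced by the coherent orientation of $(A,B)$ from section \ref{section:NOTe}, together with the fact that in the relevant range of $n$ the geodesic $L_{B^n}$ lies on the same side of $L$ along $Ax_B$ as $L_A$ sits relative to $L$ along $Ax_A$. No hyperbolic trigonometry beyond the minimum-distance characterization of the common perpendicular is required, which is why the argument is essentially the lower-bound half of the Main Lemma.
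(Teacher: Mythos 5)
Your proof is correct and follows essentially the same route as the paper: the paper proves this lemma as a ``minor modification'' of the Main Lemma, whose lower-bound half is exactly your argument --- the sub-arc of $Ax_A$ from $L \cap Ax_A$ to $L_{B^n} \cap Ax_A$ is a geodesic arc joining $L$ to $L_{B^n}$, hence has length at least their common-perpendicular distance $n T_B/2$ along $Ax_B$, while the separation hypothesis bounds it above by $T_A/2$. You also correctly identify that the hypothesis $L_A \cap Ax_B = \emptyset$ is used only to explain the absence of the upper bound $(n+1)T_B/2$, not for the inequality itself.
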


\begin{lem} \label{lem:mainpar}
If $\Tr (A) >  \Tr(B) = 2$, with $L_B \cap Ax_A \ne \emptyset$, let $q$ be the smallest s a positive integer such that $L_{B^q} \cap Ax_A \ne \emptyset$, but $L_{B^{q+1}} \cap L_A = \emptyset. $ Then
$$q =  [ { \frac{\Tr (A)-2}{ {\sqrt{|{\Tr ([A,B])-2}|} } }} ].$$
If $\Tr (A) = \Tr(B) =2$, then  if $L_A \cap L_B =  \emptyset$ the group is discrete and if  $L_B \cap L_A \ne \emptyset$, then the group is either not free or not discrete. The $F$-sequence ends.
\end{lem}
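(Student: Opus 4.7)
The plan is to pass to the upper-half-plane model, conjugate $B$ to the standard parabolic $z\mapsto z-1$ fixing $\infty$, and then apply a further dilation $z\mapsto\lambda z$ to place the Euclidean center of the semicircle $Ax_A$ at the origin. In this normalization $A=\left(\begin{array}{cc}a&b\\ c&a\end{array}\right)$ with $bc=a^2-1$, so $\Tr(A)=2a$ and $Ax_A$ is the Euclidean semicircle of radius $R=\sqrt{a^2-1}/|c|=\sinh(T_A/2)/|c|$ centered at $0$; the common perpendicular $L$ is the $y$-axis. The coherent-orientation convention fixes the sign of the translation of $B$ and the side of $L$ on which the attracting fixed point of $A$ lies, so that $L_A$ and the lines $L_{B^q}$ all end up on the same side of $L$, which is what makes the counting problem non-vacuous.

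A direct $2\times 2$ computation of the multiplicative commutator, using $bc=a^2-1$, yields $\Tr([A,B])=2+c^2$, so the denominator $\sqrt{|\Tr([A,B])-2|}$ of the claimed formula equals $|c|$. For the lines $L_{B^q}$: the factorization $B^q=H_L\circ H_{L_{B^q}}$, together with the explicit form of reflections in vertical Euclidean lines, shows that each $L_{B^q}$ is the vertical line $x=q/2$, so the successive $L_{B^q}$ are equispaced in Euclidean distance $1/2$. The question then reduces to finding the largest integer $q$ for which the vertical line $x=q/2$ separates $L$ from $L_A$.

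The crucial step is to locate $L_A$ in Euclidean coordinates. From $A=H_L\circ H_{L_A}$, $L_A$ meets $Ax_A$ perpendicularly at the point reached from the top $(0,R)$ of the semicircle by moving hyperbolic distance $T_A/2$ along $Ax_A$ on the side opposite to the direction $A$ translates. Parameterizing the semicircle by angle $\theta$ and integrating the hyperbolic arclength $ds=d\theta/\sin\theta$ gives $\tan(\theta_0/2)=e^{-T_A/2}$. Combining this with the Euclidean orthogonality condition $p^2=R^2+s^2$ for two circles perpendicular to $\mathbb{R}$, one finds that $L_A$ is the Euclidean semicircle of radius $1/|c|$ centered at $(a/|c|,0)$, with boundary endpoints $R\tanh(T_A/4)=(a-1)/|c|$ and $R\coth(T_A/4)=(a+1)/|c|$; the identity $4\sinh^2(T_A/4)=\Tr(A)-2$ is what produces the numerator $\Tr(A)-2$ of the formula.

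With these coordinates, the vertical line $L_{B^q}$ at $x=q/2$ separates $L$ from $L_A$ precisely when $0<q/2<(a-1)/|c|$, so the largest positive integer $q$ for which this happens is $[(\Tr(A)-2)/|c|]=[(\Tr(A)-2)/\sqrt{|\Tr([A,B])-2|}]$, giving the formula of the lemma. For the parabolic--parabolic case, $L$ is the geodesic joining the two distinct fixed points (distinct by non-elementarity) and $L_A,L_B$ are perpendiculars through these fixed points. If $L_A\cap L_B=\emptyset$ the three geodesics $L,L_A,L_B$ bound an ideal polygon that is a Poincar\'e fundamental domain for the index-two reflection group $\langle H_L,H_{L_A},H_{L_B}\rangle$, so $\langle A,B\rangle$ is discrete and free; if they meet, $A^{-1}B$ is elliptic or an additional parabolic identification is forced, so the group is not free or not discrete, and the $F$-sequence terminates in either sub-case. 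The main obstacle is the third paragraph, translating a hyperbolic distance on the semicircular axis into explicit Euclidean endpoints of $L_A$ while tracking the coherent-orientation signs so that the resulting inequality matches the claimed formula.
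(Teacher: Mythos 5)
Your argument is correct in substance and arrives at the right formula, but it is a genuinely different proof from the one in the paper. The paper's proof is purely algebraic and never draws the configuration: it normalizes so that the parabolic is $z\mapsto z+\tau$ and the hyperbolic is $z \mapsto \frac{az+b}{bz+a}$ with $a^2-b^2=1$, observes that each linear step multiplies the hyperbolic by the parabolic and hence reduces its trace by exactly the constant $|\tau b|$ (the bottom row is unchanged, so the decrement is the same at every step), identifies $|\tau b|=\sqrt{|\Tr([A,B])-2|}$ from the commutator trace identity $\Tr([A,B])=2+\tau^2b^2$, and then simply counts how many times this fixed amount can be subtracted from the trace of the hyperbolic before it falls to $2$. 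You instead compute Euclidean coordinates for every line in the picture --- $L$ the $y$-axis, $L_{B^q}$ the vertical lines $x=q/2$, $L_A$ the semicircle with feet $(a\pm1)/|c|$ --- and count separating lines directly, with the identity $4\sinh^2(T_A/4)=\Tr(A)-2$ producing the numerator. Your route buys a direct verification of the geometric counting statement (and makes clear that it is the separation condition, parallel to Lemma \ref{lem:main}, rather than the literal intersection-with-$Ax_A$ condition, that yields the formula), whereas the paper's route is a short computation that ties $n_j$ to the trace-minimizing description of the algorithm. Two small repairs: a dilation $z\mapsto\lambda z$ cannot move the Euclidean center of $Ax_A$ to the origin --- you need a translation (which commutes with $B$) to center $Ax_A$ and then a dilation fixing $0$ to normalize the translation amount of $B$; and when $(\Tr(A)-2)/|c|$ is an integer, $L_{B^q}$ and $L_A$ share an end, a product of the generators is parabolic, and this degenerate case should be set aside or handled separately. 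Neither affects the validity of the argument.
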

\begin{proof} (Proof of Lemma \ref{lem:main}.)

-We have the following order of points moving along the Axis of $B$ toward the repelling fixed point of $B$
$$Ax_B \cap L, Ax_B \cap L_B, \dots , Ax_B \cap L_{B^n}, Ax_B \cap L_A, Ax_B \cap L_{B^{n+1}}$$
and the order of points along the Axis of $A$ towards the repelling fixed point of $A$ is
$$Ax_A \cap L, Ax_A \cap L_B, \dots , Ax_A \cap L_{B^n}, Ax_B \cap L_A.$$
-Since distances along geodesics are additive we have
$$nT_B/2 = \rho(L \cap Ax_B, L_{B^n} \cap Ax_B) \le \rho(L \cap Ax_B, L_A \cap Ax_B) $$
 and
 $$\rho(L \cap Ax_B, L_A \cap Ax_B) \le \rho(L \cap Ax_B, L_{B^{n+1}}\cap Ax_B) = (n+1)T_B/2.$$
-Since the arc of the common perpendicular between any two geodesics has the shortest length of any geodesics arc between the two geodesics, we have
 $$nT_B/2 = \rho(Ax_B \cap L, Ax_B \cap L_{B^n}) \le \rho(Ax_A \cap L, Ax_A \cap L_{B^{n}})$$
 $$ \le
\rho(Ax_A \cap L, Ax_A \cap L_A) = T_A/2$$
Further $$T_A/2 = \rho(L \cap Ax_A, L_A \cap Ax_A) \le \rho(L \cap Ax_B, L_A \cap Ax_B)$$ $$ \le \rho(L \cap Ax_B, L_{B^{n+1}} \cap Ax_B )= T_{B^{n+1}}/2.$$
 Finally,  suppose  for some integer $r$,  $L_{B^r} \cap Ax_A = L_A \cap Ax_A$, then  $AB^{-r}$ is elliptic so the group is  not discrete and free. If $L_A \cap Ax_B = L_{B^r} \cap Ax_B$,  then again the group has an elliptic element and is not discrete  and free.
\end{proof}
As a corollary we have
\begin{cor} For $n$ as in Lemma \ref{lem:main}, $T_{AB^{-n}} \le T_B$.
\end{cor}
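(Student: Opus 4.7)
The plan is to realize $AB^{-n}$ as a product of two reflections and then bound its translation length by the perpendicular distance between the reflection lines. Conjugation by $A$ gives $AB^{-n} = A(B^{-n}A)A^{-1}$, so $T_{AB^{-n}} = T_{B^{-n}A}$, and it suffices to control the latter. Substituting the factorizations $A = H_L H_{L_A}$ and $B^n = H_L H_{L_{B^n}}$ from Section~\ref{section:factor}, the two copies of $H_L$ cancel and one obtains the clean factorization
\begin{equation*}
B^{-n}A = (H_L H_{L_{B^n}})^{-1}(H_L H_{L_A}) = H_{L_{B^n}} H_{L_A}.
\end{equation*}

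Assuming $L_{B^n}$ and $L_A$ are ultraparallel, the standard fact that a product of two reflections in disjoint geodesics is hyperbolic with translation length equal to twice the common-perpendicular distance between those geodesics gives $T_{B^{-n}A} = 2\rho(L_{B^n}, L_A)$. Thus the task reduces to showing $\rho(L_{B^n}, L_A) \le T_B/2$.

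For the distance bound I would invoke the order of intersection points along $Ax_B$ recorded in the proof of Lemma~\ref{lem:main}: the point $P := L_A \cap Ax_B$ lies strictly between $L_{B^n} \cap Ax_B$ and $L_{B^{n+1}} \cap Ax_B$, two points that are a hyperbolic distance $T_B/2$ apart. Since $L_{B^n}$ is perpendicular to $Ax_B$, the foot of the perpendicular from $P$ to $L_{B^n}$ is exactly $L_{B^n} \cap Ax_B$, so $\rho(P, L_{B^n})$ equals the arclength along $Ax_B$ between these two points, which is at most $T_B/2$. Combined with $\rho(L_{B^n}, L_A) \le \rho(L_{B^n}, P)$ (since $P \in L_A$), this gives the required estimate and hence the inequality of the corollary.

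The only point that requires care is the possibility that $L_{B^n}$ and $L_A$ fail to be ultraparallel. If they meet interior to $\mathbb{D}$ or share a boundary point, then $B^{-n}A$ is elliptic or parabolic, so $T_{AB^{-n}}=0$ and the inequality holds trivially. I do not anticipate a substantive obstacle beyond dispatching this corner case; the core of the argument is the short geometric observation that $L_A$ crosses $Ax_B$ within one fundamental step of the $B$-translation past $L_{B^n}$.
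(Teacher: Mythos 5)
Your proof is correct and follows essentially the same route as the paper's: both realize $AB^{-n}$ (up to conjugacy) as $H_{L_{B^n}}H_{L_A}$, so that $T_{AB^{-n}}/2$ is the common-perpendicular distance $\rho(L_{B^n},L_A)$, and both bound this by the distance along $Ax_B$ between $L_{B^n}\cap Ax_B$ and $L_A\cap Ax_B$, which the ordering of points from Lemma~\ref{lem:main} shows is at most $\rho(L_{B^n}\cap Ax_B, L_{B^{n+1}}\cap Ax_B)=T_B/2$. Your extra care with the perpendicular foot and with the degenerate (elliptic/parabolic) case only makes explicit what the paper leaves implicit.
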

\begin{proof} Continuing with the notation of the proof of Lemma \ref{lem:main}
we  have
$$T_{AB^{-n}}/2 \le \rho(Ax_B \cap L_A, Ax_B \cap L_{B^n})\le \rho(Ax_B \cap L_B^n, Ax_B \cap L_{B^{n+}}) = T_B/2.$$
\end{proof}
The proof of Lemma \ref{lem:PPless} only requires minor modifications from that of Lemma \ref{lem:main} and the integer $n_j$ at step $j$ is thus the  $[{ \frac{T_A/2}{T_B/2}}]$ where $A=C_j$, and $B=D_j$.
\begin{proof} (Proof of Lemma \ref{lem:mainpar}.)
 If the algorithm begins or encounters a parabolic, the final $n_t$ has a different definition. At each algorithmic step (see \cite{GM} or \cite{G1}) the  trace of the hyperbolic in the pair $(A,B) = (C_j,D_j)$ where $A$ is hyperbolic and $B$ parabolic is reduced by a fixed amount, which can be computed to be $|{\sqrt{\Tr([A,B])-2}}|$. To see this  normalize the matrices by conjugation
so that $A(z) = z + \tau$ and $B(z) = {\frac{az+b}{bz+a}}$ where $a^2-b^2=1$ to see that
the amount by which the trace is reduced can be
 written as $|\tau \cdot b|$, that is,  by $|\Tr A - \Tr AB^{-1}|$.  Also compute that $\Tr ([A,B]) = 2 + \tau^2b^2$.  Then $n_t$ is as in the statement of the Lemma.

If both $A$ and $B$ are parabolics, then by the GM algorithm either the product is elliptic and the group is not free or the product is hyperbolic or parabolic and the group is discrete so $n_j =1$.
\end{proof}

\begin{rem} When $C_j$ is hyperbolic and $D_{j}$ is parabolic, the $n_j$ found here
is the same integer as that found in Theorem 4 of \cite{P}. That theorem has different hypotheses, but the hypotheses used here are different.

Pairs of parabolics is one type of pairs of generators for which there is a complete non-algorithmic solution to the discreteness problem \cite{Beard2}. The reader should see also \cite{Parker} which  gives a complete solution to the parabolic-elliptic case and a partial solution to the elliptic-elliptic case along the same lines as \cite{Beard2}.
\end{rem}
\begin{rem}
 We note that if $T_A = T_B$, then either $L_A$ and $L_B$ intersect, in which case $AB^{-1}$ is elliptic so that $G$ is either not free or not discrete or $L_A$ and $L_B$ are disjoint in which case the group is discrete.
  \end{rem}

\begin{rem}
As shown in \cite{GM},
the algorithm stops in the if either $\Tr (C_jD_j^{-1}) \le  -2 $ in which case the group is discrete and free, $-2 < \Tr (C_jD_j^{-1} < 2$ so that the group is not free or not discrete, or $(C_j,D_j)$ violates J{\o}rgensen's inequality (e.g. $|\Tr([C_j,D_j])-2| + |\Tr^2( C_j) -4| \ge 1$).
Further if J{\o}rgensen's inequality holds, then in the $H \times H$ case, $\Tr A - \Tr (AB^{-1})$ has a positive lower bound (i.e. ${\frac{({\sqrt{2}}-1)^2}{\sqrt{2}}}$). This  assures that the number of linear steps before a Fibonacci step is finite.
\end{rem}

\begin{proof} (Proof of Theorem \ref{theorem:new}.)
Assume that at some point the algorithm returns the pair $(A,B)$. If $L_A$ and $L_B$ intersect, then the algorithm is stopped and $G$ is either not free and or not discrete. If the three geodesics $L$, $L_A$ and $L_B$ bound a region, $G$  is discrete. If J{\o}rgensen's inequality is violated, $G$ is not discrete. Otherwise, we are in one of the cases of one of the three lemmas and the result follows.
\end{proof}

Combining the results we have
\begin{thm} \label{theorem:HP}
Let $G = \langle A, B \rangle$ with $(A,B)=(C_1,D_1)$  and let  $(C_t,D_t)$ be the stopping pair for the GM algorithm.

(1) Assume that $A$ and $B$ are a pair of hyperbolics with disjoint axes and that the GM discreteness algorithm stops with such a pair.

If one applies the Euclidean division algorithm
to the non-Euclidean translation lengths of the generators at each
step, the output is the F-sequence $[n_1,...,n_t]$.
\vskip .05in
In particular if the multiplier of $A$ is $K_A$ and the multiplier
of $B$ is $K_B$, then
$$n_1= [{\frac{(|\log K_A|)/2}{(|\log K_B|)/2}}]$$ where $[\;\;]$ denotes the greatest integer function and $|\;\;|$ absolute value, \\ or equivalently if $T_X$ is the translation length of $X$:
$$n_1 = [{\frac{T_A/2}{T_B/2}}]$$
and
$$n_2 = [{\frac{T_B/2}{T_D/2}}]\;\;\; \mbox{where} \;\;\; D = A^{-1}B^{n_1}$$ and
$$n_j = [{\frac{T_{C_j}/2}{T_{D_j}/2}}]$$ where $(C_j,D_j)$ is the ordered pair of generators at step $j$, $1 \le j <  t$ in Theorem \ref{theorem:GMalg}.

(2) If at step $j$ $(C_j,D_j)$ 
 is  hyperbolic-parabolic pair, then
 $$n_j = [  { \frac{{\Tr (C_j)}-2}{ {\sqrt{|{\Tr ([C_j,D_j])-2}|} } }} ]$$ or equivalently, setting $A=C_j$ and $B = D_j$.
 $$n_j = [ {\frac{2 \cosh({\frac{T_A}{2}})-2}{\sqrt{2\cosh{( {\frac{T_{[A,B]}}{2}})-2} }}} ].$$

(3)  If  at step $j$ $(C_j,D_j)$ is a parabolic-parabolic pair, $j = t$ and  $n_t =1$.
\vskip .05in

Further if $t \ge 2$, at step $(t-1)$,  $(C_{t-1},D_{t-1})$ is  a hyperbolic-parabolic pair with   $$n_{t-1}= [  { \frac{{\Tr (C_{t-1})}-2}{ {\sqrt{|{\Tr ([C_{t-1},D_{t-1}])-2}|} } }} ]$$
and$$ n_t=1.$$

(4) If the initial pair is a hyperbolic-parabolic pair, then the $F$-sequence if of length $2$ and is $[n_1,n_2]$
where
$$n_1 = [ {\frac{2 {\cosh({\frac{T_A}{2}})}-2}{\sqrt{2\cosh{( {\frac{T_{[A,B]}}{2}})-2}} }}]$$
 and $$n_2=1.$$

(5) If the initial pair is a parabolic-parabolic pair, the $F$-sequence is of length $1$ with $n_1=1$.
  \end{thm}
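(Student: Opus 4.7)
The plan is to assemble Theorem \ref{theorem:HP} directly from Theorems \ref{theorem:new} and \ref{theorem:pars} (which have already been proved in this section) together with Lemmas \ref{lem:main}, \ref{lem:PPless}, \ref{lem:mainpar} and the algorithmic-path description of section \ref{section:path}. Since parts (1) and (2) essentially restate the two earlier theorems, the bulk of the new work is organizational: the five listed cases must be shown to exhaust the possibilities, and the appropriate formula invoked in each.

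For part (1), at a step $j$ where $(C_j,D_j)$ is a hyperbolic-hyperbolic coherently oriented pair with disjoint axes, one is in one of the two configurations covered by Lemma \ref{lem:main} (the really bad case where both $L_{C_j}\cap Ax_{D_j}$ and $L_{D_j}\cap Ax_{C_j}$ are nonempty) or by Lemma \ref{lem:PPless} (where $L_{C_j}\cap Ax_{D_j}=\emptyset$). In either configuration, the integer $n$ characterised by ``$L_{D_j^n}$ is the last power-line separating $L$ from $L_{C_j}$'' satisfies
\[ n\cdot\tfrac{T_{D_j}}{2}\;\le\;\tfrac{T_{C_j}}{2}\;\le\;(n+1)\cdot\tfrac{T_{D_j}}{2}, \]
so $n = [T_{C_j}/T_{D_j}]$. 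Converting via $T_X = |\log K_X|$ (see section \ref{section:main}) yields the multiplier form for $n_1$, and the formulas for $n_2,\ldots, n_j$ then follow by iteration, using that the next pair $(D_j^{-1}, C_j^{-1}D_j^{n_j})$ is again coherently oriented by \cite{GKwords}, so the lemmas can be reapplied.

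For part (2), I would invoke Lemma \ref{lem:mainpar} directly with $A=C_j$, $B=D_j$ to obtain the trace form of $n_j$, then substitute the identity $\Tr(X) = 2\cosh(T_X/2)$ into the numerator and denominator to pass to the translation-length form. Parts (3), (4), (5) follow from the algorithmic paths (1)--(4) of section \ref{section:path}: along every path a parabolic-parabolic stage occurs only as a terminal configuration, at which point either the product is elliptic (group not free) or is itself hyperbolic/parabolic (group discrete), so no further Fibonacci exponent is extracted and one sets $n_t = 1$ by convention. Parts (4) and (5) are the specialisations where the initial generating pair already sits at the $H\times P$ or $P\times P$ stage of such a path; in (4) one additionally applies part (2) to compute the first index $n_1$.

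The main obstacle in this assembly is checking that the case-split is genuinely exhaustive --- that at each step $j$ the pair $(C_j,D_j)$ falls into exactly one of the configurations covered by the three lemmas, and in particular that $L_{C_j}$ and $L_{D_j}$ do not intersect interior to the disc or share an endpoint in a way that secretly shortcuts the algorithm. This rests on two ingredients already in place: the coherent orientation of successive generating pairs (section \ref{section:barebones}), and J{\o}rgensen's inequality together with the positive trace-reduction bound in the closing remarks of section \ref{section:mainlemma}, which together guarantee that the algorithm terminates after finitely many steps with exactly one of the listed stopping configurations.
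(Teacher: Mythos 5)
Your proposal is correct and matches the paper's approach: the paper offers no separate argument for Theorem \ref{theorem:HP} beyond the phrase ``Combining the results we have,'' treating it exactly as you do --- an assembly of Theorem \ref{theorem:new}, Theorem \ref{theorem:pars}, Lemmas \ref{lem:main}--\ref{lem:mainpar}, and the algorithmic-path case analysis. Your added attention to the exhaustiveness of the case split (coherent orientation of successive pairs plus the J{\o}rgensen trace-reduction bound) is consistent with, and slightly more explicit than, what the paper records.
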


\section{The analogy: the Euclidean and the non-Euclidean algorithm} \label{section:analogy}

We find the sequence $[n_1,...,n_t]$ of the discreteness algorithm
by a combination of Euclidean or division algorithm type of computations with
hyperbolic lengths and hyperbolic length replacements with the
remainder term (in keeping with the trace reducing aspect of the
algorithm).

\vskip .2in  Let $a= (|\log K_A|)/2$ and $b= (|\log K_B|)/2$.

\medskip
Then these are simultaneously hyperbolic lengths and real numbers.
We can do the first step of the Euclidean type algorithm on $a$ and $b$
to obtain (assuming $a > b$):
\bigskip

{\bf Step 1.}  $a = n_1b + b_1$ where $0 \le b_1 < b$ and $n_1$ is a
positive integer.

\bigskip

{\bf Step 2.}  At step 2, in a standard Euclidean algorithm, we
would normally work with $b_1$ and $b$:

\smallskip

{\bf BUT} we replace $b_1$ by $T_D/2 = |\log K_D|/2 $. (Note: we know
from the geometry $T_D/2 \le b_1$.)

\smallskip
That is,  set $D= A^{-1}B^{n_1}$ and use ${\tilde{b_1}} = T_D/2$.
\bigskip

{\bf Subsequent Steps.} Next,  $b = n_2{\tilde{b_1}} + b_2$ where $0
\le b_2 < b_1$ and $n_2$ is an integer.
\smallskip
Replace $b_2$ by $\tilde{b_2}= T_E/2$ where $E =B(A^{-1}B^{n_1})^{n_2}$ and continue. Note that $T_E = T_{E^{-1}}.$

\smallskip {\bf Stop.} The geometric proof that the algorithm stops in a
finite number of steps (b/c the trace is reduced at each step by at
least a minimal amount) says  that here after a finite number of
steps, we are at a stopping point and is recognized by a trace that is less than $2$ (as the initial traces are taken to be positive). Thus the algorithm stops if either the group contains an elliptic (recognized by trace between -2 and 2), if the reflection axes bound a region (recognized by a trace that is less than or equal to $-2$), or if J{\o}rgensen's inequality has been violated.
\section{Including elliptics and intersecting axes} \label{section:all}

In the case that the algorithmic path encounters an elliptic, either the elliptic is of infinite order in which case the group is not discrete or it is of finite order in which case the algorithm proceeds until it reaches a decision about discreteness. The $F$-sequence needs to be modified (see \cite{Vidur}) because the algorithm uses both hyperbolic distances and angles. Angles are the same for Euclidean and non-Euclidean geometry. The interpretation as a non-Euclidean Euclidean algorithm can be continued, but we do not do so here.

For hyperbolics with intersecting axes, one does not need an algorithm to determine discreteness unless the commutator is elliptic. However, the same rule for replacing generators using the $F$-sequence will stop and find the shortest generators.

Thus whether we are in the intertwining case or the intersecting axes case, if the group is discrete and free so that the quotient is a surface, the steps used in the algorithm can be applied and interpreted as an algorithm to find the three shortest geodesics on the quotient
(see \cite{GKwords, HYPhand}). This algorithm  can be interpreted as a non-Euclidean Euclidean algorithm with the same definitions of the $n_i$. We have:

 \begin{cor} {\bf (Shortest Curves)} {\rm (initial generators hyperbolics with intersecting or disjoint axes)} \label{cor:shortest}
 Given $G = \langle A, B \rangle \subset PSL(2,\mathbb{R})$ where $A$ and $B$ are  hyperbolic either with   disjoint or intersecting axes, there is an non-Euclidean Euclidean Algorithm  that finds the  generators corresponding to the three shortest curves on the quotient surface when the group is discrete and free. There is an  $F$-sequence of integers and these integers are calculated by the same formulas as the formulas in Theorem \ref{theorem:new}.
 \end{cor}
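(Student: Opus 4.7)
The plan is to treat the disjoint-axes and intersecting-axes cases by a single length-reduction scheme parametrized by the F-sequence, and then identify the stopping pair with the triple of shortest geodesic representatives on the quotient. First, assume $Ax_A \cap Ax_B = \emptyset$. By Theorem \ref{theorem:HP} applied in the discrete-and-free case, the GM algorithm terminates with a coherently oriented pair $(C_t,D_t)$ for which $L$, $L_{C_t}$, $L_{D_t}$ are pairwise disjoint and bound a region. By the factorization of section \ref{section:factor}, this region is half of a right-angled hyperbolic hexagon that is a fundamental domain for the index-two reflection supergroup of $G$, so the three corresponding boundary translations $C_t$, $D_t$, $C_t D_t^{-1}$ project to the three shortest simple closed geodesics on the quotient surface; this is the Nielsen-reduced triple characterization used in \cite{GKwords, HYPhand}. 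The F-sequence $[n_1,\dots,n_t]$ is then given by the formulas of Theorem \ref{theorem:HP}.

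Next, for the intersecting-axes case, I would run the analogous reduction: at step $j$ with coherently oriented pair $(C_j,D_j)$ of hyperbolics, set
\[
n_j = \left[\,\frac{T_{C_j}/2}{T_{D_j}/2}\,\right], \qquad (C_{j+1},D_{j+1}) = (D_j^{-1},\, C_j^{-1} D_j^{n_j}).
\]
The common perpendicular $L$ used in the factorization is replaced by a geodesic through the intersection point of $Ax_{C_j}$ and $Ax_{D_j}$, but otherwise the chain of geodesic-additivity and common-perpendicular inequalities in the proof of Lemma \ref{lem:main} goes through, yielding
\[
n_j \frac{T_{D_j}}{2} \;\le\; \frac{T_{C_j}}{2} \;\le\; (n_j+1)\frac{T_{D_j}}{2},
\]
and the analog of its corollary gives $T_{C_{j+1}} \le T_{D_j}$, so the larger translation length in the pair is nonincreasing along the reduction and the F-sequence integers are given by the same formulas as in Theorem \ref{theorem:new}.

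The reduction is declared to stop once no further replacement shortens either generator; at that point $(C,D)$ satisfies $T_{CD^{\pm 1}} \ge \max(T_C,T_D)$, which together with coherent orientation is the defining property of a length-minimizing generating triple $(C,D,CD^{-1})$ on the quotient. The main obstacle will be establishing a strict, uniformly bounded-below decrement of the larger translation length at each intersecting-axes step: in the disjoint-axes setting the needed bound comes from J\o rgensen's inequality and the explicit constant $(\sqrt{2}-1)^2/\sqrt{2}$, but for intersecting axes discreteness is automatic, so one must instead extract monotonicity from a direct hyperbolic-trigonometric identity relating $T_{C_j}$, $T_{D_j}$, and the angle between the axes, in order to rule out asymptotic stagnation. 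The remaining delicate point is to confirm that the terminal triple globally minimizes length rather than being merely a local minimum reachable from the given initial pair; for this I would invoke the classical fact that on a discrete free rank-two hyperbolic quotient the length-minimizing generating triple is unique up to the $S_3$-action permuting $(C,D,CD^{-1})$ and replacement by inverses, so any length-decreasing move from the terminal pair would contradict stopping.
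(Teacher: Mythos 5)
Your disjoint-axes paragraph is essentially the paper's own argument: the paper does not reprove the shortest-curves property but outsources it to \cite{GKwords, HYPhand} (the stopping configuration $L$, $L_{C_t}$, $L_{D_t}$ bounding a region, fundamental domain for the index-two reflection group, boundary words giving the three shortest geodesics), and the only new content is that the $n_j$ are computed by the formulas of Theorem \ref{theorem:new}, which follows from Lemmas \ref{lem:main}--\ref{lem:mainpar} already proved. Citing those references for that step is fine.

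The genuine gap is in your intersecting-axes case. You propose to rerun the proof of Lemma \ref{lem:main} with ``$L$ replaced by a geodesic through the intersection point of $Ax_{C_j}$ and $Ax_{D_j}$,'' but two intersecting geodesics have no common perpendicular, and the entire chain of inequalities in Lemma \ref{lem:main} rests on exactly that object: the factorization $A = H_L H_{L_A}$, $B^q = H_L H_{L_{B^q}}$ with a \emph{single} line $L$ perpendicular to both axes, the linear ordering of the feet of $L, L_{B}, \dots, L_{B^n}, L_A$ along both $Ax_A$ and $Ax_B$, and the fact that the arc of the common perpendicular between two disjoint geodesics is the shortest connecting arc. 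None of this transfers verbatim. The correct device in the intersecting case (this is how \cite{G2} handles it) is to factor $A$ and $B$ as products of half-turns about points on their axes, $A = h_P h_{P_A}$, $B = h_{P_B} h_P$ with $P = Ax_A \cap Ax_B$ and $\rho(P,P_A) = T_A/2$, $\rho(P,P_B)=T_B/2$; the comparison of translation lengths then goes through hyperbolic triangle trigonometry in the triangle $P P_A P_B$ rather than through separating perpendiculars. Your two self-flagged obstacles are also real, but the first is resolved more cheaply than you suggest: once the group is known to be discrete (automatic for intersecting axes with non-elliptic commutator), termination does not need a uniform decrement, only the discreteness of the length spectrum, since the reduction strictly decreases the translation length of an element of a fixed discrete group at each Fibonacci step. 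The identification of the terminal triple with the globally shortest one is again the content of \cite{GKwords, HYPhand} and should be cited rather than asserted as ``classical.''
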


\smallskip
{\bf Acknowledgements.} The author would like to thank Bill Goldman, Ryan Hoban, and Sergei Novikov for some useful and thought provoking conversations and e-mail exchanges. Thanks are also due to the Mathematics Department at the CUNY Graduate Center for its hospitality during the author's sabbatical. This work was partially supported by the NSF when the author was on an IPA appointment which included released time for research. The NSF has no responsibility for the content.

\smallskip

The author thanks the referee for numerous helpful suggestions and comments.

\bibliographystyle{amsplain}



\end{document}